\newtheorem{theorem}{Theorem}
\newtheorem{lemma}[theorem]{Lemma}
\newtheorem{claim}[theorem]{Claim}
\newtheorem{cor}[theorem]{Corollary}
\newtheorem{conj}[theorem]{Conjecture}
\newcommand{\rank}{{\rm  rank}}
\newcommand{\dist}{{\rm dist}}
\newcommand{\R}{{\mathbb R}}
\newcommand{\real}{{\mathbb R}}
\newcommand{\sm}{{\setminus}}
\newcommand{\scrr}{{\mathcal R}}
\newcommand{\scrb}{{\mathcal B}}
\newcommand{\scrm}{{\mathcal M}}
\colorlet{colG}{DarkSeaGreen}
\definecolor{colR}{HTML}{CC6677}
\definecolor{colO}{HTML}{DDCC77}
\definecolor{colB}{HTML}{6699CC}
\tikzset{roundnode/.style={circle,draw=black!50,fill=black!20,inner sep=1.2pt}}
\tikzset{every loop/.style={}}
\tikzstyle{vertex}=[circle, draw, fill=black, inner sep=0pt, minimum size=4pt]
\tikzstyle{smallvertex}=[circle, draw, fill=black, inner sep=0pt, minimum size=2pt]
\tikzstyle{edge}=[line width=1.5pt,black!50!white]
\tikzstyle{genericgraph}=[dashed]
\tikzstyle{labelsty}=[font=\scriptsize]
\Crefname{enumi}{}{}
\date{}
\title{Flexible circuits in the $d$-dimensional rigidity matroid\footnote{MSC: 52C25 (primary) and 05C10 (secondary). Key-words and phrases: bar-joint framework, rigid graph, rigidity matroid, flexible circuit.}}
\author{Georg Grasegger\thanks{Johann Radon Institute for Computational and Applied Mathematics (RICAM), Austrian Academy of Sciences. E-mail: georg.grasegger@ricam.oeaw.ac.at}, Hakan Guler\thanks{Department of Mathematics, Faculty of Arts \& Sciences, Kastamonu University, Kastamonu, Turkey. E-mail: hakanguler19@gmail.com}, Bill Jackson\thanks{School of Mathematical Sciences, Queen Mary
University of London, Mile End Road, London E1 4NS, United Kingdom.
E-mail: b.jackson@qmul.ac.uk} ~and Anthony Nixon\thanks{Department of Mathematics and Statistics, Lancaster University, Lancaster, LA1 4YF, United Kingdom. E-mail: a.nixon@lancaster.ac.uk}}
\begin{document} \maketitle

\begin{abstract}
A bar-joint framework $(G,p)$ in $\mathbb{R}^d$ is rigid if the only edge-length preserving continuous motions of the vertices arise from isometries of $\mathbb{R}^d$. It is known that, when $(G,p)$ is generic, its rigidity depends only on the underlying graph $G$, and 
is determined by the rank of the edge set of $G$ in the generic $d$-dimensional rigidity matroid $\scrr_d$. Complete combinatorial descriptions of the rank function of this matroid are known when $d=1,2$, and imply that all circuits in $\scrr_d$ are generically rigid in $\mathbb{R}^d$ when $d=1,2$. Determining the rank function of $\scrr_d$ is a long standing open problem when $d\geq 3$,  and the existence of non-rigid circuits in $\scrr_d$ for $d\geq 3$ is a major contributing factor to why this problem is so difficult.
We begin a study of non-rigid circuits  by characterising the non-rigid circuits in $\scrr_d$ which have at most $d+6$ vertices.
\end{abstract}

\section{Introduction}

A bar-joint \emph{framework} $(G,p)$ in $\mathbb{R}^d$ is the combination of a finite graph $G=(V,E)$ and a realisation $p:V\rightarrow \mathbb{R}^d$. The framework is said to be \emph{rigid} if the only edge-length preserving continuous motions of its vertices arise from isometries of $\mathbb{R}^d$, and otherwise it is said to be {\em flexible}. The study of the rigidity of frameworks has its origins in the work of Cauchy and Euler on Euclidean polyhedra \cite{Ca} and Maxwell \cite{Ma} on frames. 

Abbot \cite{Abb} showed that it is  NP-hard to determine whether a given $d$-dimensional framework is rigid whenever $d\geq 2$. The problem becomes more tractable for generic frameworks $(G,p)$ since we can linearise the problem and consider `infinitesimal rigidity' instead. We define the  \emph{rigidity matrix} $R(G,p)$ as the $|E|\times d|V|$ matrix  in which, for $e=v_iv_j\in E$, the submatrices in row $e$ and columns $v_i$ and $v_j$ are $p(v_i)-p(v_j)$ and $p(v_j)-p(v_i)$, respectively, and all other entries are zero.
We say that $(G,p)$ is \emph{infinitesimally rigid} if $|V|\leq d+1$ and $\rank\, R(G,p)=\binom{|V|}{2}$ or $|V|\geq d+2$ and $\rank\, R(G,p)=d|V|-\binom{d+1}{2}$.
Asimow and Roth \cite{AR} showed that infinitesimal rigidity is equivalent to rigidity for generic frameworks (and hence that generic rigidity depends only on the underlying graph of the framework).

The {\em $d$-dimensional rigidity matroid} of a graph $G=(V,E)$ is the matroid $\scrr_d(G)$ on $E$ in which a set of edges $F\subseteq E$ is independent whenever the corresponding rows of $R(G,p)$ are independent, for some (or equivalently every) generic $p$. 
We denote the rank function of $\scrr_d(G)$ by $r_d$ and put $r_d(G)=r_d(E)$. We say that  $G$ is: \emph{$\scrr_d$-independent} if $r_d(G)=|E|$; \emph{$\scrr_d$-rigid} if $G$ is a complete graph on at most $d+1$ vertices or $r_d(G)=d|V|-\binom{d+1}{2}$; \emph{minimally $\scrr_d$-rigid} if $G$ is $\scrr_d$-rigid and $\scrr_d$-independent; and an \emph{$\scrr_d$-circuit} if $G$ is not $\scrr_d$-independent but $G-e$ is $\scrr_d$-independent for all $e\in E$.

It is not difficult to see that the 1-dimensional rigidity matroid of a graph $G$ is  equal to its cycle matroid.
Landmark results of Pollaczek-Geiringer \cite{laman,pol}, and  Lov\'asz and Yemini \cite{LY} characterise independence  and the rank function in  $\scrr_2$. These results imply that  every $\scrr_d$-circuit is rigid when $d= 1,2$. This is no longer true  when $d\geq 3$ (see Figures~\ref{fig:BgraphsGeneral} and \ref{fig:BgraphsExample}  below),
and the existence of flexible $\scrr_d$-circuits is a fundamental obstuction to obtaining a combinatorial characterisation of independence in $\scrr_d$. 

Previous work on flexible $\scrr_d$-circuits has concentrated on constructions, see Tay \cite{Tay}, and Cheng, Sitharam and Streinu \cite{CSS}.
We will adopt a different approach: that of characterising the flexible $\scrr_d$-circuits in which the number of vertices is small compared to the dimension. To state our theorem we need to define the following two families of graphs.

For $d\geq 3$ and $2\leq t\leq d-1$, 
the graph $B_{d,t}$ is defined by putting $B_{d,t}=(G_1\cup G_2)-e$ where $G_i\cong K_{d+2}$, $G_1\cap G_2\cong K_{t}$ and $e\in E(G_1\cap G_2)$. 
Note that 
the graph $B_{3,2}$  is the well known flexible $\scrr_3$-circuit,  commonly referred to as the ``double banana''.
The family  $\scrb_{d,d-1}^+$ consists of all graphs of the form  $
(G_1\cup G_2)-\{e,f,g\}$ where: $G_1\cong K_{d+3}$ and $e,f,g\in E(G_1)$;  $G_2\cong K_{d+2}$ and $e\in E(G_2)$; $G_1\cap G_2\cong K_{d-1}$;  $e,f,g$ do not all have a common end-vertex; if $\{f,g\}\subset E(G_1)\sm E(G_2)$ then $f,g$ do not have a common end-vertex.
See Figure~\ref{fig:BgraphsGeneral} for an illustration of the general construction and Figure~\ref{fig:BgraphsExample} for specific examples.

\begin{theorem}\label{thm:vertex} 
Suppose $G$ is a flexible $\scrr_d$-circuit with at most $d+6$ vertices. Then either
\begin{enumerate}
  \item $d=3$ and $G\in \{B_{3,2}\}\cup \scrb_{3,2}^+$ or
  \item  $d\geq 4$ and $G\in \{B_{d,d-1}$, $B_{d,d-2}\}\cup \scrb_{d,d-1}^+$.
\end{enumerate}
\end{theorem}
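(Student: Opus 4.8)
The plan is to first extract the numerical and degree constraints forced by the two hypotheses, and then to reduce flexibility of such a dense graph to the existence of a separation into two rigid pieces meeting in fewer than $d$ vertices. Write $n=|V|=d+k$. Since $G$ is an $\scrr_d$-circuit we have $r_d(G)=|E|-1$, while flexibility gives $r_d(G)\le dn-\binom{d+1}{2}-1$; combining these yields $|E|\le dn-\binom{d+1}{2}$, so the complement $\overline G$ has at least $\binom{n}{2}-dn+\binom{d+1}{2}=\binom{k}{2}$ edges. I would next record the standard fact that every vertex of an $\scrr_d$-circuit has degree at least $d+1$: a vertex $v$ of degree at most $d$ has the property that, generically, the vectors $p(v)-p(u)$ over its neighbours $u$ are independent, so the rows of $R(G,p)$ incident to $v$ cannot enter any linear dependency, whence $v$ lies in no circuit. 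Thus the maximum degree of $\overline G$ satisfies $\Delta(\overline G)\le n-d-2=k-2$. Finally $K_{d+2}$, the unique circuit on $d+2$ vertices, is rigid, so a flexible circuit has $n\ge d+3$, and the hypothesis gives $k\le 6$.

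The heart of the argument is the following structural claim, which I expect to be the main obstacle: a flexible graph $G$ with minimum degree at least $d+1$ and at most $d+6$ vertices admits a partition $V=V_1\cup V_2$, with $S=V_1\cap V_2$, such that $G[V_1]$ and $G[V_2]$ are $\scrr_d$-rigid, there are no edges of $G$ between $V_1\sm S$ and $V_2\sm S$, and $|S|\le d-1$. The easy direction is the gluing lemma (two rigid graphs sharing at least $d$ vertices have rigid union, so a genuine flex forces the pieces to meet in at most $d-1$ vertices); the difficulty is the converse, that the \emph{absence} of such a separation forces rigidity. I would attempt this by induction on $n$ using vertex addition: if $G-v$ is rigid for some $v$ then $G$ is rigid, since $\deg(v)\ge d+1>d$, so in a minimal flexible counterexample $G-v$ is flexible for every $v$; one then applies the inductive hypothesis to each $G-v$ and reassembles the resulting separations, using the sparsity of $\overline G$ to control how the separators of the various $G-v$ interact. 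Managing this bookkeeping, together with the low-vertex base cases in which a piece $G[V_i]$ may have only $d+1$ vertices (hence must be complete), is where the real work lies.

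Granting the separation lemma, the rest is a finite case analysis driven by the two bounds. Put $a=|V_1\sm S|$, $b=|V_2\sm S|$, $s=|S|$. Rigidity of $G[V_i]$ forces $a+s,\,b+s\ge d+1$, and since $|S|\le d-1$ we get $a+b=n-s\ge k+1$; meanwhile each vertex of $V_2\sm S$ is non-adjacent in $G$ to all of $V_1\sm S$, so $a\le\Delta(\overline G)\le k-2$, and symmetrically $b\le k-2$. Hence $k+1\le a+b\le 2(k-2)$, which is inconsistent for $k\le 4$ and disposes of $n\le d+4$. For $k=5$ it forces $a=b=3,\ s=d-1$; for $k=6$ it forces $(a,b,s)\in\{(4,4,d-2),(3,4,d-1),(4,3,d-1)\}$. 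In each surviving case the vertices of $V_1\sm S$ and $V_2\sm S$ already attain degree exactly $d+1$, so every further non-edge lies inside a piece; imposing that $G[V_1],G[V_2]$ be rigid yet that $G$ be a \emph{single} circuit (and so contain no proper subcircuit such as a $K_{d+2}$) pins down how many edges may be deleted from each near-complete piece and where they may sit. This reproduces the pieces $K_{d+2}-e$ and $K_{d+3}-\{e,f,g\}$ of $B_{d,d-1}$, $B_{d,d-2}$ and $\scrb_{d,d-1}^{+}$, together with the stated conditions on $e,f,g$. The deleted edge in a $B$-type graph must lie in $S$, so the case $(4,4,d-2)$ requires $s=d-2\ge 2$, i.e.\ $d\ge 4$; this is exactly the mechanism that excludes $B_{3,1}$ and produces the split between $d=3$ and $d\ge 4$ in the statement.

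Finally I would verify the converse, that each listed graph is a flexible $\scrr_d$-circuit, by a direct rank computation. Two rigid bodies meeting in $t$ generic shared points retain $\binom{d-t+1}{2}$ internal degrees of freedom, so gluing copies of $K_{d+2}$ (and $K_{d+3}$) along a shared $K_t$ with $t\in\{d-1,d-2\}$ gives $r_d=dn-\binom{d+1}{2}-\binom{d-t+1}{2}$; substituting the edge counts yields $|E|-r_d=1$ in every case, so each graph is dependent with exactly one excess edge, and $\binom{d-t+1}{2}\ge 1$ certifies flexibility. To upgrade ``one excess edge'' to ``circuit'' it then remains to check that deleting any single edge restores $\scrr_d$-independence, equivalently that every edge lies in the closure of the remaining edges; this is where the hypotheses that $e,f,g$ do not share a common end-vertex (and that non-common $f,g$ are disjoint) are used, as they guarantee that the three deletions defining $\scrb_{d,d-1}^{+}$ do not drop the rank.
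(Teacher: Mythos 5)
Your proposal founders at precisely the step you identify as its heart: the separation claim is false for flexible graphs, and it is only in that generality that your induction could invoke it. Concretely, take $d=3$ and $G=K_{4,4}$: it has $8\le d+6$ vertices and minimum degree $4=d+1$, and it is flexible simply because $|E|=16<3\cdot 8-6=18$; yet $K_{4,4}$ is $4$-connected, so there is no partition $V=V_1\cup V_2$ with $|V_1\cap V_2|\le d-1=2$ and no edges between $V_1\sm S$ and $V_2\sm S$ (and taking $V_1=V$ is excluded, since $G$ itself is not rigid). For general $d\ge 3$ the $(d-3)$-fold cone of $K_{4,4}$ does the same job: coning preserves flexibility (Lemma \ref{lem:coning}), and the coned graph has $d+5$ vertices, minimum degree $d+1$, and is $(d+1)$-connected, so again no separator of size at most $d-1$ can exist. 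The moral is that flexibility of a small graph of minimum degree $d+1$ need not be witnessed by a hinge-type cut at all; it can come from a globally sparse but highly connected edge distribution. Your induction lives in exactly this false generality: in a minimal counterexample you pass to $G-v$, which is a flexible graph but not a circuit (and which, separately, loses the minimum-degree hypothesis, so the inductive hypothesis does not even apply to it). Restricting the claim to flexible \emph{circuits} dodges these counterexamples, but then it cannot serve as an inductive lemma (circuits are not closed under vertex deletion) and it is essentially Theorem \ref{thm:vertex} itself, so the reduction becomes circular.

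The size of the hole is visible in what the paper is forced to do instead. When some vertex has degree $d+1$, the paper works with a maximal minimally rigid subgraph $G'$ and the at most four leftover vertices, which attach to $G'$ by edges --- there is no clean two-piece separation --- and it is the circuit property, not flexibility alone, that drives the analysis. More tellingly, the case $\delta(G)\ge d+2$, which your claim would make vanish (granted a separation, every vertex outside $S$ has degree at most $d+1$, except on a side of size four where it is at most $d+2$), is exactly where the paper needs an induction on $d$ via coning, a vertex-splitting argument, a graph-theoretic lemma on complements, and two computer verifications (Lemma \ref{lem:comp}): that all $6$-regular graphs on $10$ vertices are $\scrr_4$-independent and all $12$-regular graphs on $15$ vertices are $\scrr_9$-independent. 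Any correct proof must confront these near-regular, highly connected candidates; your separation claim simply asserts them away. Your final paragraph (the converse, that the listed graphs are circuits) is fine as an outline, but it too defers the substantive step --- checking that deleting any single edge restores independence, which the paper carries out by explicit $0$-/$1$-extension reductions and $2$-sum arguments --- so nothing there offsets the gap in the forward direction.
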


Theorem \ref{thm:vertex}
gives the following lower bound on the number of edges in a flexible $\scrr_d$-circuit. This is used in \cite{GJ} to obtain an upper bound on $r_d(G)$ for all $1\leq d\leq 11$.

\begin{cor}\label{cor:edges} Suppose $G=(V,E)$ is a flexible $\scrr_d$-circuit. Then 
$|E|\geq d(d+9)/2$, with equality if and only if $G=B_{d,d-1}$.
\end{cor}

Jord\'an \cite{Jor} characterises $\scrr_d$-rigid graphs with at most $d+4$ vertices. He suggests in \cite[Remark 1]{Jor} that it may be possible to extend the characterisation to graphs on more than $d+4$ vertices, but notes that the simple degree condition  given in his characterisation may not be sufficient because of the existence of the double banana.
Theorem \ref{thm:vertex} 
implies the following characterisation of $\scrr_d$-rigid graphs with at most $d+6$ vertices. Our characterisation is in terms of   {\em $d$-tight} subgraphs (which are defined in the next section). 

\begin{cor}\label{cor:tight}
Let $G=(V,E)$ be a graph with $d+1\leq |V|\leq d+6$. Then
$G$ is $\scrr_d$-rigid if and only if $G$ has a $d$-tight, $d$-connected spanning subgraph $H$ such that $B_{d,d-1},B_{d,d-2}\not\subseteq H$.
\end{cor}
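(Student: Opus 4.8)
The plan is to route both directions through the observation that $G$ is $\scrr_d$-rigid exactly when it has a spanning minimally $\scrr_d$-rigid subgraph, together with the fact that a $d$-tight graph is minimally $\scrr_d$-rigid precisely when it contains no $\scrr_d$-circuit. We may assume $|V|\ge d+1$, since a complete graph on at most $d+1$ vertices is rigid and meets the right-hand condition with $H=G$. For the forward implication, suppose $G$ is rigid and let $H$ be a basis of $\scrr_d(G)$, i.e.\ a spanning minimally rigid subgraph. Then $H$ is $\scrr_d$-independent with $|E(H)|=d|V|-\binom{d+1}{2}$, so (using that independence forces $d$-sparsity) $H$ is $d$-tight, and independence also means $H$ contains no $\scrr_d$-circuit, in particular no $B_{d,d-1}$ or $B_{d,d-2}$. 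To obtain $d$-connectivity I would invoke the standard fact that a rigid graph on at least $d+1$ vertices is $d$-connected: a separator of size at most $d-1$ spans an affine flat of dimension at most $d-2$, and rotating one side about this flat yields a non-trivial flex; since $H$ is rigid it is $d$-connected, completing this direction.

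For the converse, given $H$ as in the statement it suffices to prove that $H$ is rigid, since adding edges preserves rigidity and $H$ is spanning. As $H$ is $d$-tight it is $d$-sparse, while every rigid $\scrr_d$-circuit $C$ satisfies $|E(C)|=d|V(C)|-\binom{d+1}{2}+1$, violating $d$-sparsity; hence $H$ contains no rigid circuit. Therefore $H$ is rigid if and only if it contains no flexible circuit, and since $|V(H)|\le d+6$ I can feed this into Theorem~\ref{thm:vertex}: the only flexible circuits that could occur are $B_{d,d-1}$, $B_{d,d-2}$ and the members of $\scrb_{d,d-1}^+$ (with $B_{3,2}$ and $\scrb_{3,2}^+$ in the case $d=3$).

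The circuits $B_{d,d-1}$ and $B_{d,d-2}$ are forbidden by hypothesis, where for $d=3$ the condition on $B_{d,d-2}=B_{3,1}$ is vacuous, matching case (1) of the theorem. The members of $\scrb_{d,d-1}^+$ are instead eliminated by the connectivity hypothesis: a short count shows each has exactly $d+6$ vertices and exactly $d(d+6)-\binom{d+1}{2}$ edges and, being a circuit, is $d$-sparse, hence $d$-tight; so if one were a subgraph of the $d$-tight graph $H$ on at most $d+6$ vertices, equality of vertex sets and of edge counts would force $H$ to equal it, yet each such graph has a separating set of $d-1$ vertices (the shared $K_{d-1}$) and so is not $d$-connected, contradicting the $d$-connectivity of $H$; the same argument handles $\scrb_{3,2}^+$. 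Thus $H$ contains no circuit, is $\scrr_d$-independent, and, having the tight count, is minimally rigid and hence rigid, so $G$ is rigid. The key point to get right is the bookkeeping that separates the three families: $B_{d,d-1}$ and the $\scrb^+$ graphs attain the tight count and are therefore removed either by spanning all $d+6$ vertices and failing $d$-connectivity, or by sitting as a strictly smaller circuit inside the independent $H$; by contrast $B_{d,d-2}$ falls two edges short of the tight count, so it can embed as a proper spanning subgraph of a $d$-tight graph and genuinely requires the explicit exclusion in the statement. Establishing the $(d-1)$-separator structure of the $\scrb^+$ family and these counts is the principal, if routine, work, the sole external input being that rigidity implies $d$-connectivity.
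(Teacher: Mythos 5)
Your proof is correct, and it is essentially the argument the paper intends: the paper gives no explicit proof, asserting only that the corollary follows immediately from Theorem~\ref{thm:vertex}, and your derivation spells out exactly that route. In the forward direction a spanning basis $H$ is $d$-tight by Lemma~\ref{lem:MW} plus the edge count, contains no circuits (so no $B_{d,d-1}$ or $B_{d,d-2}$), and is $d$-connected because rigid graphs on at least $d+1$ vertices are $d$-connected (which, besides your geometric flex argument, also follows in the paper's own toolkit from Lemma~\ref{lem:intbridge}\ref{it:intbridge:rank}); in the converse direction rigid circuits are excluded by $d$-sparsity, $B_{d,d-1}$ and $B_{d,d-2}$ by hypothesis, and the graphs in $\scrb_{d,d-1}^+$ because they are $d$-tight on exactly $d+6$ vertices, forcing any such circuit $C$ to equal $H$, while their $(d-1)$-separator $V(G_1)\cap V(G_2)$ contradicts the $d$-connectivity of $H$.

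One remark on your closing commentary; it does not affect the validity of the proof, which correctly invokes the hypothesis for both forbidden subgraphs, but the commentary itself is wrong. You suggest that the exclusion of $B_{d,d-1}$ could be recovered from tightness and connectivity alone, so that only $B_{d,d-2}$ ``genuinely requires'' explicit exclusion. In fact $B_{d,d-1}$ has $d+5$ vertices and attains the tight count on its own vertex set, so it can sit as a \emph{proper} subgraph of a $d$-tight graph $H$ on $d+6$ vertices without violating sparsity; attaching one new vertex of degree $d$ with neighbours in both $V(G_1)\sm V(G_2)$ and $V(G_2)\sm V(G_1)$ produces such an $H$ which is moreover $d$-connected, yet dependent and hence not rigid. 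Your alternative clause, that a strictly smaller copy would be ruled out by ``sitting inside the independent $H$'', is circular, since the independence of $H$ is precisely what is being proved. So both exclusions in the statement are essential, and both are genuinely used in your case analysis.
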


\begin{figure}[ht]
  \begin{center}
    \begin{tikzpicture}
      \draw[genericgraph,pattern=north east lines,pattern color=black!10!white] (-0.6,0) circle[x radius=1.2cm, y radius=0.8cm];
			\draw[genericgraph,pattern=north west lines,pattern color=black!10!white] (0.6,0) circle[x radius=1.2cm, y radius=0.8cm];
			\node[vertex] (u1) at (0,0.4) {};
			\node[vertex] (u2) at (0,-0.4) {};
			\draw[edge,densely dashed,colR] (u1)to node[label={[labelsty,label distance=-0.1cm]right:$e$}] {} (u2);
			\draw[dashed,colR] decorate [decoration={markings,mark=at position 0.3cm with {\draw[colR,thick] (-3pt,-3pt) -- (3pt,3pt);\draw[colR,thick] (3pt,-3pt) -- (-3pt,3pt);}}] {(u1)--(u2)};
			\node[pin={[labelsty,pin distance=0.5cm]below:$d-1$ vertices}] at (0,-0.5) {};
			\node[pin={[labelsty,pin distance=0.75cm,align=center]above:$K_{d+2}-e$}] at (-1,0) {};
			\node[pin={[labelsty,pin distance=0.75cm,align=center]above:$K_{d+2}-e$}] at (1,0) {};
    \end{tikzpicture}
    \quad
    \begin{tikzpicture}
      \draw[genericgraph,pattern=north east lines,pattern color=black!10!white] (-0.6,0) circle[x radius=1.2cm, y radius=0.8cm];
			\draw[genericgraph,pattern=north west lines,pattern color=black!10!white] (0.6,0) circle[x radius=1.2cm, y radius=0.8cm];
			\node[vertex] (u1) at (0,0.4) {};
			\node[vertex] (u2) at (0,-0.4) {};
			\draw[edge,densely dashed,colR] (u1)to node[label={[labelsty,label distance=-0.1cm]right:$e$}] {} (u2);
			\draw[dashed,colR] decorate [decoration={markings,mark=at position 0.3cm with {\draw[colR,thick] (-3pt,-3pt) -- (3pt,3pt);\draw[colR,thick] (3pt,-3pt) -- (-3pt,3pt);}}] {(u1)--(u2)};
			\node[pin={[labelsty,pin distance=0.5cm]below:$d-2$ vertices}] at (0,-0.5) {};
			\node[pin={[labelsty,pin distance=0.75cm,align=center]above:$K_{d+2}-e$}] at (-1,0) {};
			\node[pin={[labelsty,pin distance=0.75cm,align=center]above:$K_{d+2}-e$}] at (1,0) {};
    \end{tikzpicture}
    \quad
    \begin{tikzpicture}[xscale=-1]
      \draw[genericgraph,pattern=north east lines,pattern color=black!10!white] (-0.8,0) circle[x radius=1.2cm, y radius=0.8cm];
			\draw[genericgraph,pattern=north west lines,pattern color=black!10!white] (0.8,0) circle[x radius=1.2cm, y radius=0.8cm];
			\node[vertex] (u1) at (0,0.4) {};
			\node[vertex] (u2) at (0,-0.4) {};
			\node[vertex] (v1) at (1.4,0.4) {};
			\node[vertex] (v2) at (1.7,-0.2) {};
			\node[vertex] (w1) at (0.6,-0.55) {};
			\node[vertex] (w2) at (1.2,-0.45) {};
			\draw[edge,densely dashed,colR] (u1)to node[label={[labelsty,label distance=-0.1cm]right:$e$}] {} (u2);
			\draw[edge,densely dashed,colR] (v1)to node[label={[labelsty,label distance=-0.1cm]left:$f$}] {} (v2);
			\draw[edge,densely dashed,colR] (w1)to node[label={[labelsty,label distance=-0.1cm]above:$g$}] {} (w2);
			\draw[dashed,colR] decorate [decoration={markings,mark=at position 0.3cm with {\draw[colR,thick] (-3pt,-3pt) -- (3pt,3pt);\draw[colR,thick] (3pt,-3pt) -- (-3pt,3pt);}}] {(u1)--(u2)};
			\draw[dashed,colR] decorate [decoration={markings,mark=at position 0.3cm with {\draw[colR,thick] (-3pt,-3pt) -- (3pt,3pt);\draw[colR,thick] (3pt,-3pt) -- (-3pt,3pt);}}] {(v1)--(v2)};
			\draw[dashed,colR] decorate [decoration={markings,mark=at position 0.3cm with {\draw[colR,thick] (-3pt,-3pt) -- (3pt,3pt);\draw[colR,thick] (3pt,-3pt) -- (-3pt,3pt);}}] {(w1)--(w2)};
			\node[pin={[labelsty,pin distance=0.5cm]below:$d-1$ vertices}] at (0,-0.5) {};
			\node[pin={[labelsty,pin distance=0.75cm,align=center]above:$K_{d+2}-e$}] at (-1,0) {};
			\node[pin={[labelsty,pin distance=0.75cm,align=center]above:$K_{d+3}-\{e,f,g\}$}] at (1,0) {};
    \end{tikzpicture}
  \end{center}
  \caption{Graphs $B_{d,d-1}$ on the left, $B_{d,d-2}$ in the middle and
  $G\in \scrb_{d,d-1}^+$ on the right.}
	\label{fig:BgraphsGeneral}
\end{figure}

\begin{figure}[h]
\begin{center}
\begin{tikzpicture}[font=\small]
	\foreach \i in {0,...,4}{
		\node[roundnode] at (36+72*\i:1cm) (\i) []{};
		\foreach \j in {0,...,\i}{
			\ifthenelse{\i=\j \OR \i=4 \AND \j=0}
				{}
				{\draw[] (\i)--(\j)};
		}
	}
	\begin{scope}[shift={(36:1cm)}]
		\begin{scope}[shift={(-36:1cm)}]
			\foreach \i in {5,...,7}{
				\node[roundnode] at ({144-72*(\i-4)}:1cm) (\i) []{};
				\draw[] (\i)--(0);
				\draw[] (\i)--(4);
				\foreach \j in {5,...,\i}{
					\ifthenelse{\i=\j \OR \i=4 \AND \j=0}
						{}
						{\draw[] (\i)--(\j)};
				}
			}
		\end{scope}
	\end{scope}

	\begin{scope}[xshift=4cm]
		\foreach \i in {0,...,5}{
			\node[roundnode] at (30+60*\i:1cm) (\i) []{};
			\foreach \j in {0,...,\i}{
				\ifthenelse{\i=\j \OR \i=5 \AND \j=0}
					{}
					{\draw[] (\i)--(\j)};
			}
		}
		\begin{scope}[shift={(30:1cm)}]
			\begin{scope}[shift={(-30:1cm)}]
				\foreach \i in {6,...,9}{
					\node[roundnode] at ({150-60*(\i-5)}:1cm) (\i) []{};
					\draw[] (\i)--(0);
					\draw[] (\i)--(5);
					\foreach \j in {5,...,\i}{
						\ifthenelse{\i=\j \OR \i=5 \AND \j=0}
							{}
							{\draw[] (\i)--(\j)};
					}
				}
			\end{scope}
		\end{scope}
	\end{scope}

	\begin{scope}[xshift=10cm,xscale=-1]
		\foreach \i in {0,...,4}{
			\node[roundnode] at (36+72*\i:1cm) (\i) []{};
			\foreach \j in {0,...,\i}{
				\ifthenelse{\i=\j \OR \i=4 \AND \j=0}
					{}
					{\draw[] (\i)--(\j)};
			}
		}
		\begin{scope}[shift={(36:1cm)}]
			\begin{scope}[shift={(-36:1cm)}]
				\foreach \i in {5,...,7}{
					\node[roundnode] at ({144-72*(\i-4)}:1cm) (\i) []{};
					\draw[] (\i)--(0);
					\draw[] (\i)--(4);
					\foreach \j in {5,...,\i}{
						\ifthenelse{\i=\j \OR \i=4 \AND \j=0 \OR \i=7 \AND \j=5}
							{}
							{\draw[] (\i)--(\j)};
					}
				}
				\node[roundnode] at (36:1.5cm) (8) []{}
					edge[] (0)
					edge[] (5)
					edge[] (6)
					edge[] (7);
			\end{scope}
		\end{scope}
	\end{scope}
\end{tikzpicture}
\end{center}
\caption{Graphs $B_{3,2}$ on the left, $B_{4,2}$ in the middle and 
$G\in \scrb_{3,2}^+$ on the right.}
\label{fig:BgraphsExample}
\end{figure}

We will prove Theorem \ref{thm:vertex} and Corollaries \ref{cor:edges} and \ref{cor:tight} in Section \ref{sec:main}.

\section{Preliminary Lemmas}
We will introduce some standard terminology and results from rigidity theory. We assume throughout this section that $d\geq 1$ is a fixed integer.
 
Given a vertex $v$ in a graph $G=(V,E)$, we will use $d_G(v)$ and $N_G(v)$ to denote the degree and neighbour set respectively of $v$. 
For a set $V'\subseteq V$, we put $N_G(V')=\left(\bigcup_{v\in V'} N_G(v)\right)-V'$.
We will use $\delta(G)$ and $\Delta(G)$ to denote the minimum and maximum degree, respectively, in $G$, and  $\dist_G(x,y)$ to denote the length of a shortest path between two vertices $x,y\in V$. We will suppress the subscript in these notations whenever the graph is clear from the context.
The graph $G$ is \emph{$d$-sparse} if $|E'| \leq d|V'|-\binom{d+1}{2}$ for all subgraphs $G'=(V',E')$ of $G$ with $|V'|\geq d+2$. It is {\em $d$-tight} if it is $d$-sparse and has $d|V|-\binom{d+1}{2}$ edges.
Our first result \cite[Lemma 11.1.3]{Wlong} shows that every $\scrr_d$-independent graph is $d$-sparse.

\begin{lemma}\label{lem:MW}
Let $G=(V,E)$ be $\scrr_d$-independent with $|V|\geq d+2$. Then $|E|\leq d|V|-\binom{d+1}{2}$.
\end{lemma}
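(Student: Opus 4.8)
The plan is to bound $r_d(G)$ by exhibiting a large subspace of the kernel of the rigidity matrix, consisting of the infinitesimal isometries that are present for every graph. Fix a generic $p\colon V\to\mathbb{R}^d$, so that $r_d(G)=\rank R(G,p)$ by the definition of $\scrr_d(G)$. A velocity vector $q\colon V\to\mathbb{R}^d$ lies in the (right) kernel of $R(G,p)$ exactly when $(p(u)-p(v))\cdot(q(u)-q(v))=0$ for every edge $uv\in E$; call such a $q$ an infinitesimal motion. By rank--nullity applied to $q\mapsto R(G,p)q$ we have $r_d(G)=d|V|-\dim\ker R(G,p)$, so it suffices to prove $\dim\ker R(G,p)\geq {d+1\choose 2}$.

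To produce this many kernel elements I would use the trivial infinitesimal motions. For a skew-symmetric $d\times d$ matrix $S$ and a vector $t\in\mathbb{R}^d$, set $q_{S,t}(v)=Sp(v)+t$ for all $v\in V$. A direct computation gives $(p(u)-p(v))\cdot(q_{S,t}(u)-q_{S,t}(v))=(p(u)-p(v))\cdot S(p(u)-p(v))=0$ by skew-symmetry of $S$, so each $q_{S,t}$ is an infinitesimal motion and hence lies in $\ker R(G,p)$. The space of pairs $(S,t)$ has dimension ${d\choose 2}+d={d+1\choose 2}$, so the whole argument reduces to showing that the linear map $(S,t)\mapsto q_{S,t}$ is injective.

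This is the only place where the hypothesis is used: since $|V|\geq d+2\geq d+1$, for generic $p$ the images $p(V)$ affinely span $\mathbb{R}^d$, and in particular the pairwise differences $p(u)-p(v)$ span $\mathbb{R}^d$. Granting this, suppose $q_{S,t}\equiv 0$. Taking differences yields $S(p(u)-p(v))=0$ for all $u,v\in V$, whence $S=0$ because the differences span, and then $t=0$. Thus $(S,t)\mapsto q_{S,t}$ is injective, the trivial motions span a ${d+1\choose 2}$-dimensional subspace of $\ker R(G,p)$, and therefore $r_d(G)=d|V|-\dim\ker R(G,p)\leq d|V|-{d+1\choose 2}$, as claimed. (The $\scrr_d$-independence hypothesis is not needed for this inequality; it only upgrades it to $|E|\leq d|V|-{d+1\choose 2}$ through $r_d(G)=|E|$.)

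The argument is essentially routine, and the only real content is the dimension count for the trivial motions. The step I would treat most carefully is the affine-spanning claim, since it is exactly what prevents the trivial motions from collapsing to fewer than ${d+1\choose 2}$ dimensions in a degenerate configuration (for instance, all points collinear when $d\geq 2$). This follows because affine independence of a generic $(d+1)$-subset of $p(V)$ is an open dense condition, expressed by the non-vanishing of a polynomial in the coordinates of $p$, and hence holds at every generic $p$.
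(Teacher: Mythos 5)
Your proof is correct and is the standard argument for this fact: exhibit the trivial infinitesimal motions $q_{S,t}(v)=Sp(v)+t$ in $\ker R(G,p)$, use genericity (hence affine spanning of $p(V)$, which needs only $|V|\geq d+1$) to show the map $(S,t)\mapsto q_{S,t}$ is injective on the ${d+1\choose 2}$-dimensional space of pairs, and conclude by rank--nullity. The paper itself gives no proof of this lemma---it is cited directly from \cite[Lemma 11.1.3]{Wlong}---and your argument is essentially the canonical one, including your correct observation that the $\scrr_d$-independence hypothesis plays no role in the rank bound itself.
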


The characterisations of $\scrr_d$-independence when $d\leq 2$ show that the converse of Lemma \ref{lem:MW} holds for these values of $d$. The existence of flexible $\scrr_d$-circuits implies that the converse fails for all $d\geq 3$.

A graph $G'$ is said to be obtained from another graph $G$ by: a \emph{($d$-dimensional) 0-extension}
if $G=G'-v$ for a vertex  $v\in V(G')$ with $d_{G'}(v)=d$; or a \emph{($d$-dimensional) 1-extension} if $G=G'-v+xy$ for a vertex $v\in V(G')$ with $d_{G'}(v)=d+1$ and $x,y\in N_{G'}(v)$.
The inverse operations of 0-extension and 1-extension are called \emph{0-reduction} and \emph{1-reduction}, respectively.

\begin{lemma}\label{lem:01ext}\cite[Lemma 11.1.1, Theorem 11.1.7]{Wlong}
Let $G$ be $\scrr_d$-independent and let $G'$ be obtained from $G$ by a 0-extension or a 1-extension. Then $G'$ is $\scrr_d$-independent.
\end{lemma}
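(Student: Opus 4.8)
The plan is to use the fact that $\scrr_d$-independence is an open condition: the set of realisations $p'$ for which $R(G',p')$ has full row rank is the union, over all size-$|E(G')|$ subsets of columns, of the non-vanishing loci of the corresponding maximal minors, and is therefore Zariski-open; so if it is nonempty it contains every generic point. Hence in each case it suffices to exhibit a single (possibly non-generic) realisation $p'$ of $G'$ at which the rows of $R(G',p')$ are linearly independent. Throughout I would fix a generic realisation $p$ of $G$ witnessing independence, so that the rows of $R(G,p)$ are independent, and then extend $p$ to $G'$.

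For a 0-extension, write $G'=G+v$ with $N_{G'}(v)=\{u_1,\dots,u_d\}$, and set $p'|_{V(G)}=p$ with $p'(v)$ a generic point of $\R$. Ordering the rows so that the edges of $G$ precede the $d$ edges $vu_i$, and the column block for $v$ comes last, every edge of $G$ avoids $v$ and so vanishes on the $v$-block; thus $R(G',p')$ is block lower-triangular with diagonal blocks $R(G,p)$ and the $d\times d$ matrix $B$ whose $i$-th row is $p'(v)-p'(u_i)$. The first block has independent rows by the choice of $p$, and $\det B\neq 0$ for generic $p'(v)$ since $p'(v)-p'(u_1),\dots,p'(v)-p'(u_d)$ are then linearly independent. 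Hence $R(G',p')$ has full row rank and $G'$ is $\scrr_d$-independent.

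The 1-extension is the substantive case. Write the inverse operation as $G'=(G-xy)+v$ with $N_{G'}(v)=\{x,y,u_1,\dots,u_{d-1}\}$ and $xy\in E(G)$. The key idea is to place $v$ on the line through $p(x)$ and $p(y)$: set $p'|_{V(G)}=p$ and $p'(v)=\lambda p(x)+(1-\lambda)p(y)$ for generic $\lambda$. Writing $w=p(x)-p(y)$, the edge vectors $p'(v)-p'(x)=-(1-\lambda)w$ and $p'(v)-p'(y)=\lambda w$ are parallel, and a short computation shows that, restricted to the columns indexed by $V(G)$, the combination $\lambda\cdot\mathrm{row}(vx)+(1-\lambda)\cdot\mathrm{row}(vy)$ equals $\lambda(1-\lambda)$ times the row of $R(G,p)$ indexed by $xy$. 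Now suppose $\sum_{e\in E(G-xy)}a_e\,\mathrm{row}(e)+b_x\,\mathrm{row}(vx)+b_y\,\mathrm{row}(vy)+\sum_i c_i\,\mathrm{row}(vu_i)=0$. Reading this equation on the $v$-block gives $-b_x(1-\lambda)w+b_y\lambda w+\sum_i c_i(p'(v)-p'(u_i))=0$; for generic $p$ and $\lambda$ the $d$ vectors $w,\,p'(v)-p'(u_1),\dots,p'(v)-p'(u_{d-1})$ are linearly independent, so $c_i=0$ for all $i$ and $b_y\lambda=b_x(1-\lambda)$, forcing $(b_x,b_y)=t(\lambda,1-\lambda)$ for a scalar $t$. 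Substituting back and restricting to the $V(G)$-columns yields $\sum_{e\in E(G-xy)}a_e\,\mathrm{row}_{R(G,p)}(e)+t\lambda(1-\lambda)\,\mathrm{row}_{R(G,p)}(xy)=0$, a dependence among the rows of $R(G,p)$; since $G$ is independent every coefficient vanishes, and as $\lambda\neq 0,1$ we get $t=0$, whence $a_e=b_x=b_y=c_i=0$. Thus $R(G',p')$ has full row rank and $G'$ is $\scrr_d$-independent.

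The main obstacle is the 1-extension, and specifically the two genericity verifications it rests on: first, that after placing $v$ on the line $p(x)p(y)$ the vectors $p'(v)-p'(u_i)$ together with the direction $w$ remain linearly independent (so that the only dependence on the $v$-block is the one forced by the parallel pair $vx,vy$); and second, the row-combination identity relating $\mathrm{row}(vx)$ and $\mathrm{row}(vy)$ to $\mathrm{row}(xy)$. Both reduce to checking that a single determinant is a nonzero polynomial in the coordinates and in $\lambda$, but they are exactly what allows the non-generic collinear placement to convert a dependence in $R(G',p')$ into one in $R(G,p)$; the openness principle from the first paragraph then transfers independence to generic realisations.
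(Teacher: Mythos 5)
Your proof is correct: the block-triangular argument for the 0-extension and the collinear placement of $v$ on the line through $p(x),p(y)$ for the 1-extension, combined with the Zariski-openness of full row rank, is a complete and valid argument. The paper itself gives no proof of this lemma (it is cited from Whiteley), and your argument is essentially the classical one from that reference, so there is nothing to reconcile.
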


We can use Lemma \ref{lem:01ext}  to show that an extension operation which adds a copy of $K_3$ preserves minimal rigidity.

\begin{lemma}\label{lem:rigunion}
Let $G=(V,E)$ be a graph, $\{V_1,V_2\}$ be a partition of $V$ and put $G_i=G[V_i]$ for $i=1,2$. Suppose $G_1$ is minimally $\scrr_d$-rigid, $G_2\cong K_3$,
each vertex of $G_2$ has $d-1$ neighbours in $G_1$ and the set of all neighbours of the vertices of $G_2$  in $G_1$ has size at least $d$.
Then $G$ is minimally $\scrr_d$-rigid.
\end{lemma}

\begin{proof}
Let $V(G_2)=\{x,y,z\}$ and $N_x, N_y, N_z$ denote the set of neighbours of $x,y,z$ in $G_1$, respectively. 
Since $|N_x\cup N_y\cup N_z|\geq d$ and $|N_x|=|N_y|=|N_z|=d-1$, at most two of the sets $N_x,N_y,N_z$ can be the same.
Therefore, we may assume that either the sets $N_x,N_y,N_z$ are all pairwise distinct, or $N_x=N_y\neq N_z$ (by relabelling if necessary).
This implies that the sets $N_z\setminus N_x$ and $N_y\setminus N_z$ are non-empty.
Then $G$ can be obtained from $G_1$ as follows. We first perform  a 0-extension which adds $x$ and edges from $x$ to its $d-1$ neighbours in $N_x$ as well as $w$ for some $w\in N_z\setminus N_x$. We next perform
a 1-extension which deletes $xw$, and adds $z$  and the edges from $z$ to its $d-1$ neighbours in $N_z$ as well as to $x$ and $u$ for some $u\in N_y\setminus N_z$. Finally we perform one more
1-extension which deletes $zu$ and adds $y$  and the edges from $y$ to its $d-1$ neighbours in $N_y$ as well as $x$ and $z$. See Figure \ref{fig:l9}.
Hence, $G$ is $\scrr_d$-independent by Lemma \ref{lem:01ext}.  Minimal $\scrr_d$-rigidity now follows by a simple edge count.
\end{proof}

\begin{figure}
\begin{center}
  \begin{tikzpicture}
      \draw[genericgraph,pattern=north east lines,pattern color=black!10!white] (-0.6,0) circle[x radius=0.8cm, y radius=1.5cm];
			\node[vertex,label={[labelsty]below:$x$}] (u2) at (1.5,-0.4) {};
			\node[vertex,label={[labelsty]left:$w$}] (w) at (-0.5,-0.2) {};
			\node[] (c2) at (-0.55,-1.3) {};
			\draw[edge] (u2)to node[labelsty,below] {$d-1$} (c2);
			\draw[edge] (u2)edge($(c2)+(0,0.25)$) (u2)edge($(c2)+(0,0.5)$) (u2)edge(w);
    \end{tikzpicture}
    \qquad
    \begin{tikzpicture}
      \draw[genericgraph,pattern=north east lines,pattern color=black!10!white] (-0.6,0) circle[x radius=0.8cm, y radius=1.5cm];
			\node[vertex,label={[labelsty]above:$z$}] (u1) at (1.5,0.4) {};
			\node[vertex,label={[labelsty]below:$x$}] (u2) at (1.5,-0.4) {};
			\node[vertex,label={[labelsty]left:$u$}] (u) at (-0.5,0.2) {};
			\node[] (c1) at (-0.65,1.4) {};
			\node[] (c2) at (-0.55,-1.3) {};
			\draw[edge] (u1)edge(u2);
			\draw[edge] (u1)to node[labelsty,above] {$d-1$} (c1);
			\draw[edge] (u1)edge($(c1)-(0,0.25)$) (u1)edge($(c1)-(0,0.5)$) (u1)edge(u);
			\draw[edge] (u2)to node[labelsty,below] {$d-1$} (c2);
			\draw[edge] (u2)edge($(c2)+(0,0.25)$) (u2)edge($(c2)+(0,0.5)$);
    \end{tikzpicture}
    \qquad
    \begin{tikzpicture}
      \draw[genericgraph,pattern=north east lines,pattern color=black!10!white] (-0.6,0) circle[x radius=0.8cm, y radius=1.5cm];
			\node[vertex,label={[labelsty]above:$z$}] (u1) at (1.5,0.4) {};
			\node[vertex,label={[labelsty]below:$x$}] (u2) at (1.5,-0.4) {};
			\node[vertex,rotate around=-60:(u1),label={[labelsty]60:$y$}] (u3) at (u2) {};
			\coordinate (w) at (-0.5,-0.2) {};
			\node[] (c1) at (-0.65,1.4) {};
			\node[] (c2) at (-0.55,-1.3) {};
			\draw[edge] (u1)edge(u2) (u2)edge(u3) (u3)edge(u1);
			\draw[edge] (u1)to node[labelsty,above] {$d-1$} (c1);
			\draw[edge] (u1)edge($(c1)-(0,0.25)$) (u1)edge($(c1)-(0,0.5)$);
			\draw[edge] (u2)to node[labelsty,below] {$d-1$} (c2);
			\draw[edge] (u2)edge($(c2)+(0,0.25)$) (u2)edge($(c2)+(0,0.5)$);
			\draw[edge] (u3)to node[labelsty,above,pos=0.15] {$d-1$} ($(w)+(0,0.25)$);
			\draw[edge] (u3)edge(w) (u3)edge($(w)-(0,0.25)$);
    \end{tikzpicture}
\end{center}
\caption{Construction of $G$ in the proof of Lemma \ref{lem:rigunion}.}
\label{fig:l9}
\end{figure}

A {\em ($d$-dimensional) vertex split} of a graph $G=(V,E)$ is the operation defined as follows: choose $v\in V$, $x_1,x_2,\dots,x_{d-1}\in N_G(v)$ and 
a partition $N_1,N_2$ of 
pairwise disjoint sets $N_1,N_2$ with $N_1\cup N_2=N_G(v)\setminus \{x_1,x_2,\dots,x_{d-1}\}$; then delete $v$ from
$G$ and add two new vertices $v_1,v_2$ joined to $N_1,N_2$, 
respectively; finally add new edges $v_1v_2,v_1x_1, v_2x_1,\break v_1x_2,v_2x_2,\dots, v_1x_{d-1},v_2x_{d-1}$.

\begin{lemma}\label{lem:vsplit}\cite[Proposition 10]{Wsplit}
Let $G$ be $\scrr_d$-independent and let $G'$ be obtained from $G$ by a vertex split. Then $G'$ is $\scrr_d$-independent.
\end{lemma}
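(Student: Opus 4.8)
The plan is to use the standard fact that $G'$ is $\scrr_d$-independent if and only if the rows of its rigidity matrix $R(G',p')$ are linearly independent for \emph{some} realisation $p'$; equivalently, that $(G',p')$ admits no nonzero \emph{self-stress} (a function $\omega\colon E(G')\to\real$ with $\sum_{u\in N(w)}\omega_{wu}(p'(w)-p'(u))=0$ at every vertex $w$). Since $G$ is $\scrr_d$-independent I may fix a generic realisation $p$ of $G$ carrying only the zero self-stress, and set $q=p(v)$. I then construct $p'$ by keeping $p'(u)=p(u)$ for every unchanged vertex, placing $v_1$ at $q$ and $v_2$ at $q+\epsilon t$, where $\epsilon>0$ is small and the direction $t\in\real^d$ is chosen at the end. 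The goal is to show that for a suitable $t$ and all sufficiently small $\epsilon$ the framework $(G',p')$ has no nonzero self-stress.

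First I would analyse the \emph{degenerate} configuration $\epsilon=0$, in which $v_1$ and $v_2$ coincide at $q$. Given a self-stress $\omega$ of this framework, define $\bar\omega$ on $E(G)$ by $\bar\omega_{vu}=\omega_{v_1u}$ for $u\in N_1$, $\bar\omega_{vu}=\omega_{v_2u}$ for $u\in N_2$, $\bar\omega_{vx_i}=\omega_{v_1x_i}+\omega_{v_2x_i}$, and $\bar\omega_e=\omega_e$ otherwise. Because $v_1$ and $v_2$ sit at the same point $q$, the equilibrium conditions for $\omega$ at the unchanged vertices, together with the sum of the equilibrium conditions at $v_1$ and $v_2$ (in which the $v_1v_2$ terms cancel), are precisely the equilibrium conditions for $\bar\omega$ in $(G,p)$. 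Hence $\bar\omega$ is a self-stress of $(G,p)$, so $\bar\omega=0$. A short computation then shows $\omega$ is supported on the edges $v_1x_i,v_2x_i$ (with $\omega_{v_1x_i}=-\omega_{v_2x_i}$) and $v_1v_2$; substituting this into the equilibrium at $v_1$ and using that $p(x_1)-q,\dots,p(x_{d-1})-q$ are linearly independent (by genericity of $p$) kills the $x_i$-stresses too. Thus the self-stress space of the degenerate framework is \emph{exactly} one-dimensional, spanned by the indicator $\mathbf 1_{v_1v_2}$, whose rigidity-matrix row is identically zero when $v_1=v_2$.

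Next I would rule out self-stresses for small $\epsilon>0$ by a normalised limiting argument. If, for some sequence $\epsilon_n\to 0^+$, the frameworks carried nonzero self-stresses $\omega^{(n)}$, I would normalise them and pass to a convergent subsequence; the limit is a self-stress of the degenerate framework, hence a nonzero multiple $c\,\mathbf 1_{v_1v_2}$ by the previous step. Since the $v_1v_2$ row is \emph{exactly} linear in $\epsilon$ (its only nonzero entries are $\pm\epsilon t$) and the stress map restricted to $\{\omega_{v_1v_2}=0\}$ is injective at $\epsilon=0$, the components of $\omega^{(n)}$ away from $v_1v_2$ have order $\epsilon_n$; rescaling them by $\epsilon_n$ and taking a further limit produces a well-defined first-order stress $\zeta$. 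Projecting $\zeta$ to $G$ exactly as above again yields a self-stress of $(G,p)$, hence $\zeta$ is forced to vanish off the edges $v_1x_i,v_2x_i$, after which the surviving relation at $v_1$ reads $\sum_i \zeta_{v_1x_i}(q-p(x_i))=c\,t$ with $c\neq0$, forcing $t\in\mathrm{span}\{p(x_i)-p(v)\}$.

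The construction is completed by choosing $t\notin\mathrm{span}\{p(x_i)-p(v)\}$, which is possible since this span is $(d-1)$-dimensional by genericity of $p$. This contradiction shows $(G',p')$ has no nonzero self-stress for small $\epsilon>0$, so $R(G',p')$ has independent rows and $G'$ is $\scrr_d$-independent. I expect the main obstacle to be the quantitative part of the limiting argument: pinning down that the degenerate self-stress space is \emph{exactly} the line through $\mathbf 1_{v_1v_2}$, and that the off-$v_1v_2$ components decay linearly in $\epsilon$, so that the first-order stress $\zeta$ exists and the clean span condition on $t$ emerges. Everything else is bookkeeping with the equilibrium equations.
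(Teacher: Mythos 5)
The paper contains no proof of this lemma for you to be compared against: it is quoted directly from Whiteley \cite[Proposition 10]{Wsplit}. Your argument is a correct, self-contained proof, and in substance it reconstructs the standard coincident-point limit argument that Whiteley himself uses. Step 1 (the degenerate framework with $v_1,v_2$ both at $p(v)$ has self-stress space exactly the line spanned by $\mathbf{1}_{v_1v_2}$) is right: folding a stress $\omega$ back to $\bar\omega$ on $G$ works because the $v_1v_2$ terms vanish when the endpoints coincide, $\scrr_d$-independence of $G$ at generic $p$ gives $\bar\omega=0$, and linear independence of the $d-1$ generic vectors $p(x_i)-p(v)$ then kills the surviving $\pm$ pairs on the $x_i$ edges. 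Step 2 is also sound, and the quantitative point you flag as the main obstacle is closed by an observation you already have in place: by Step 1 the rows of the degenerate rigidity matrix other than the $v_1v_2$ row are linearly independent, so the smallest singular value of that submatrix stays bounded away from $0$ for small $\epsilon$, which yields the $O(\epsilon_n)$ bound on the off-$v_1v_2$ components, hence the existence of the rescaled limit $\zeta$; folding $\zeta$ onto $G$ (the external terms $\pm c\,t$ at $v_1,v_2$ cancel in the sum) forces $\zeta$ to live on the $x_i$ edges and then $t\in\mathrm{span}\{p(x_i)-p(v)\}$, contradicting the choice of $t$. Note that for this last step you only need the span to be a proper subspace of $\real^d$, which is automatic for $d-1$ vectors; genericity is genuinely used only in Step 1. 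Two housekeeping points are worth stating explicitly if you write this up: independence of the rows of $R(G',p')$ at your single, highly non-generic $p'$ does imply $\scrr_d$-independence of $G'$, since the rank of the rigidity matrix is maximised at generic configurations; and the argument goes through unchanged when $N_1$ or $N_2$ is empty. What your route buys, compared with the paper's bare citation, is a self-contained proof; what it costs is precisely the analytic bookkeeping (normalised limits, singular-value bound) that the paper avoids by outsourcing the lemma.
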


Given a graph $G$, the \emph{cone} $G'$ of $G$ is the graph obtained from $G$ by adding a new vertex adjacent to every vertex of $G$.

\begin{lemma}\label{lem:coning}
Let $G'$ be the cone of a graph $G$. Then:
\begin{enumerate}
 \item\label{it:coning:rig} $G$ is $\scrr_d$-rigid if and only if $G'$ is $\scrr_{d+1}$-rigid \cite{Wcone};
 \item\label{it:coning:circ} $G$ is an $\scrr_d$-circuit if and only if $G'$ is an $\scrr_{d+1}$-circuit \cite{GGJ}.
\end{enumerate}
\end{lemma}

Our next two lemmas concern the operation of `gluing' two graphs together.

\begin{lemma}\label{lem:intbridge}\cite[Lemma 11.1.9]{Wlong}
Let $G_1$, $G_2$ be subgraphs of a graph $G$ and suppose that $G=G_1\cup G_2$.
\begin{enumerate}
\item\label{it:intbridge:rig} 
	If $|V(G_1)\cap V(G_2)|\geq d$ and $G_1,G_2$ are $\scrr_d$-rigid then $G$ is $\scrr_d$-rigid.
\item\label{it:intbridge:indep} 
	If $G_1\cap  G_2$ is $\scrr_d$-rigid and $G_1,G_2$ are $\scrr_d$-independent then $G$ is $\scrr_d$-independent.
\item\label{it:intbridge:rank} 
	If $|V(G_1)\cap V(G_2)| \leq d-1$, $u\in V(G_1)-V(G_2)$ and $v\in V(G_2)-V(G_1)$ then
	$r_d(G+uv)=r_d(G)+1$.
\end{enumerate}
\end{lemma}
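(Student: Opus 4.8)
The plan is to work throughout at a fixed generic realisation $p$, so that by Asimow and Roth \cite{AR} rigidity, independence and rank are all read off from the rigidity matrix $R(\cdot,p)$. Write $V_i=V(G_i)$ and $V_0=V_1\cap V_2$, and for a graph $H$ let $\mathrm{Fl}(H)=\ker R(H,p)$ be its space of infinitesimal flexes, so that $\dim \mathrm{Fl}(H)=d|V(H)|-r_d(H)$. Two facts will be used repeatedly: (i) the trivial flexes are the restrictions of the infinitesimal isometries $x\mapsto Sx+t$ of $\R^d$ (with $S$ skew-symmetric), and an infinitesimal isometry vanishing on $d$ affinely independent points is identically zero, since the difference vectors span a hyperplane through the origin on which $S$ vanishes, forcing $\mathrm{rank}\,S\leq 1$ and hence $S=0$ (even rank) and then $t=0$; and (ii) because $E(G)=E(G_1)\cup E(G_2)$, a map $u\colon V\to\R^d$ lies in $\mathrm{Fl}(G)$ if and only if $u|_{V_1}\in\mathrm{Fl}(G_1)$ and $u|_{V_2}\in\mathrm{Fl}(G_2)$. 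Genericity guarantees that any $d$ vertices of $V$ are affinely independent.

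For the first statement, let $u\in\mathrm{Fl}(G)$. By (ii), $u|_{V_1}$ and $u|_{V_2}$ are flexes of the rigid graphs $G_1,G_2$, hence are restrictions of infinitesimal isometries $\iota_1,\iota_2$. Since $|V_0|\geq d$, these agree on $d$ affinely independent points, so $\iota_1-\iota_2$ vanishes there and by (i) $\iota_1=\iota_2=:\iota$. Thus $u=\iota$ on all of $V$, every flex of $G$ is trivial, and $G$ is $\scrr_d$-rigid.

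The second statement rests on the rank identity
\[
  r_d(G)=r_d(G_1)+r_d(G_2)-r_d(G_1\cap G_2),
\]
valid whenever $G_1\cap G_2$ is $\scrr_d$-rigid. To obtain it, view $\mathrm{Fl}(G)$ via (ii) as the kernel of $\Phi\colon\mathrm{Fl}(G_1)\oplus\mathrm{Fl}(G_2)\to\R^{d|V_0|}$, $(u_1,u_2)\mapsto u_1|_{V_0}-u_2|_{V_0}$, whose image is $\rho_1\mathrm{Fl}(G_1)+\rho_2\mathrm{Fl}(G_2)$ where $\rho_i$ denotes restriction to $V_0$. Because $G_1\cap G_2\subseteq G_i$ is rigid, restricting any flex of $G_i$ to $V_0$ yields a trivial flex of $G_1\cap G_2$; hence each $\rho_i\mathrm{Fl}(G_i)$ is contained in the space of restricted isometries on $V_0$, and it also contains that space since global isometries are flexes of $G_i$. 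Therefore $\mathrm{im}\,\Phi$ equals the flex space of the rigid graph $G_1\cap G_2$, of dimension $d|V_0|-r_d(G_1\cap G_2)$. Feeding the dimensions into $\dim\mathrm{Fl}(G)=\dim\mathrm{Fl}(G_1)+\dim\mathrm{Fl}(G_2)-\dim\mathrm{im}\,\Phi$ and cancelling $d|V|=d|V_1|+d|V_2|-d|V_0|$ gives the identity. For the second statement, $G_1,G_2$ are independent and $G_1\cap G_2\subseteq G_1$ is therefore independent too, so all three ranks equal the respective edge counts and the identity yields $r_d(G)=|E(G)|$, i.e.\ $G$ is independent.

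For the third statement I will exhibit a flex of $G$ that stretches $uv$, forcing $r_d(G+uv)=r_d(G)+1$. As $|V_0|\leq d-1$, the points $p(V_0)$ span an affine subspace of dimension at most $d-2$, so there is a nonzero infinitesimal isometry $\iota$ of $\R^d$ vanishing on $p(V_0)$; define $w$ by $w|_{V_1}=0$ and $w|_{V_2}=\iota$, which is consistent on $V_0$ and lies in $\mathrm{Fl}(G)$ by (ii). Writing $S$ for the skew part of $\iota$ and fixing $v_0\in V_0$, the edge $uv$ is stretched exactly when $(p(u)-p(v))^{\top}S(p(v)-p(v_0))\neq 0$, which is achievable by some admissible $S$ precisely when the projections of $p(u)-p(v)$ and $p(v)-p(v_0)$ onto the orthogonal complement of $\langle p(v_0')-p(v_0)\rangle_{v_0'\in V_0}$ are linearly independent; this complement has dimension at least $2$ and, since $u,v\notin V_0$ and $p$ is generic, the two projections are independent. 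This last step is where the main difficulty lies: one must verify that among the infinitesimal isometries fixing the small set $V_0$ some velocity at $v$ has a nonzero component along $p(u)-p(v)$. The gluing statements are comparatively routine once the fibre-product description of $\mathrm{Fl}(G)$ is in place, requiring only care with degenerate affine spans when $|V_0|$ is small.
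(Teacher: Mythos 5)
The paper never proves this lemma itself---it is quoted from Whiteley \cite[Lemma 11.1.9]{Wlong}---so there is no internal proof to compare against; your argument is a self-contained proof from first principles, and it is essentially correct. Working at a fixed generic $p$ and identifying the flex space of $G=G_1\cup G_2$ with the fibre product of the flex spaces of $G_1$ and $G_2$ over their restrictions to $V_0=V(G_1)\cap V(G_2)$ is the standard way to prove all three parts: \ref{it:intbridge:rig} is the ``two infinitesimal isometries agreeing on $d$ affinely independent points coincide'' argument; \ref{it:intbridge:indep} follows from the rank identity $r_d(G)=r_d(G_1)+r_d(G_2)-r_d(G_1\cap G_2)$, which your rank--nullity computation establishes correctly; and \ref{it:intbridge:rank} correctly reduces $r_d(G+uv)=r_d(G)+1$ to exhibiting a flex of $G$ that stretches $uv$, built from an infinitesimal isometry fixing the at most $d-1$ shared points and acting only on $G_2$. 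What this buys over a bare citation is transparency: the same bookkeeping yields the quantitative version of \ref{it:intbridge:indep} and makes explicit exactly what ``rigid'' must mean for the hypotheses to do their job.

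Two points to tighten. First, your proof of \ref{it:intbridge:indep} uses ``rigid $\Rightarrow$ every generic flex is trivial''. That is the right hypothesis, but it is strictly stronger than the paper's literal definition of $\scrr_d$-rigid when $|V_0|<d$: a path on three vertices has $r_4=2=4\cdot 3-\binom{5}{2}$, hence is ``$\scrr_4$-rigid'' in the paper's sense, yet it has nontrivial flexes; gluing two copies of $K_6-e$ along such a path produces $B_{4,3}$, which the paper shows is a flexible $\scrr_4$-circuit, so part \ref{it:intbridge:indep} is actually false under the literal rank-based definition. Your flex-theoretic reading is the one under which the cited lemma holds, so this is a defect of the definition rather than of your argument, but you should state explicitly which notion you are using. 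Second, in \ref{it:intbridge:rank} the case $V_0=\emptyset$ needs a separate sentence (there is no $v_0$ to fix, but a translation along $p(u)-p(v)$ already stretches $uv$), and the closing genericity claim---that the two projections are linearly independent---deserves its one-line justification: linear dependence is a polynomial condition in the coordinates of $p$ which fails for some explicit configuration, hence fails for every generic $p$. Neither point is a genuine gap.
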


Lemma \ref{lem:intbridge}\ref{it:intbridge:indep} immediately implies that every $\scrr_d$-circuit $G=(V,E)$ is $2$-connected and that,
if $G-\{u,v\}$ is disconnected for some $u,v\in V$, then $uv\not\in E$. Our next lemma gives more structural information for the case when $G-\{u,v\}$ is disconnected. 

Given three graphs $G=(V,E)$, $G_1=(V_1,E_1)$, and $G_2=(V_2,E_2)$, 
we say that
$G$ is a {\em $2$-sum of $G_1,G_2$ along an edge $e$} if $G=(G_1\cup G_2)-e$, $G_1\cap G_2=K_2$ and $e\in E_1\cap E_2$.
Our next result shows that the 2-sum of  $G_1,G_2$ is an $\scrr_d$-circuit if and only if $G_1,G_2$ are both $\scrr_d$-circuits. Its proof relies on the matroid circuit elimination axiom (which states that if $C_1,C_2$ are distinct circuits in a matroid $\scrm$ and $e\in C_1\cap C_2$ then $(C_1\cup C_2)-e$ contains a circuit of $\scrm$). 

\begin{lemma}\label{lem:2-sum} 
Suppose that $G=(V,E)$ is the $2$-sum of $G_1=(V_1,E_1)$ and $G_2=(V_2,E_2)$. Then $G$ is an $\scrr_d$-circuit if and only if $G_1$ and $G_2$ are both $\scrr_{d}$-circuits. 
\end{lemma}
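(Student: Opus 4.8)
The plan is to reduce everything to a single rank identity and then read off both directions by short bookkeeping. Throughout I work inside the matroid $\scrr_d(G_1\cup G_2)$, using the standard fact that the rigidity matroid of a subgraph is the restriction of that of the ambient graph, so that $\scrr_d(G_1)$ and $\scrr_d(G_2)$ are the restrictions to $E_1$ and $E_2$. The engine of the proof is the identity
\[
r_d(G_1\cup G_2)=r_d(G_1)+r_d(G_2)-1,
\]
valid whenever $G_1\cap G_2$ is the single edge $uv$ together with its endpoints (so $uv\in E_1\cap E_2$), for all $d\geq 1$. I shall call this (A). I also record the trivial facts $|E(G)|=|E_1|+|E_2|-2$ and that $G_1-uv$ and $G_2-uv$ are subgraphs of $G$, and note that $G_1\cup G_2=G+uv$.

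The hard part is (A); this is the technical heart, and the rest is formal. I would prove it by comparing spaces of infinitesimal flexes at a realisation $r$ of $G_1\cup G_2$ chosen so that $r|_{V_1}$ and $r|_{V_2}$ are generic for $G_1$ and $G_2$ (such an $r$ exists and is generic for $G_1\cup G_2$, since $V_1$ and $V_2$ overlap only in $u,v$). A flex of $(G_1\cup G_2,r)$ is exactly a pair of flexes of $(G_1,r)$ and $(G_2,r)$ that agree on the velocities at $u,v$, so the flex space is the fibre product of the two flex spaces over their images in the $2d$-dimensional space of velocity pairs at $(u,v)$. The key observation is that, because $uv$ is an edge of both $G_1$ and $G_2$, each image lies in the hyperplane cut out by the bar-$uv$ constraint, while the trivial infinitesimal motions of $\mathbb{R}^d$ already realise every pair in that hyperplane: the rigid motions fixing two distinct points have a $\binom{d-1}{2}$-dimensional stabiliser, leaving $\binom{d+1}{2}-\binom{d-1}{2}=2d-1$ degrees of freedom on the pair, which is exactly the hyperplane dimension. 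Hence both images equal this single hyperplane, their sum has dimension $2d-1$, and the fibre-product dimension count, combined with $\dim(\text{flex})=d|V|-r_d$ at generic realisations, rearranges to (A). (If a suitable gluing lemma is available in the literature, one could cite it here instead.)

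For the \emph{if} direction, assume $G_1,G_2$ are circuits, so $r_d(G_i)=|E_i|-1$ and $r_d(E_i)=r_d(E_i-uv)$ for $i=1,2$. By (A), $r_d(G_1\cup G_2)=|E_1|+|E_2|-3$; since $uv$ lies in the closure of $E_1-uv\subseteq E(G)$ we get $r_d(G)=r_d(G_1\cup G_2)=|E(G)|-1$, so $G$ is dependent. For minimality take $e\in E(G)$, say $e\in E_1-uv$; then $G-e=\big((G_1-e)\cup G_2\big)-uv$ with $(G_1-e)\cap G_2$ still equal to the edge $uv$, so (A) applied to $G_1-e$ (independent of rank $|E_1|-1$) and $G_2$ (rank $|E_2|-1$) gives $r_d((G_1-e)\cup G_2)=|E_1|+|E_2|-3$; as $uv$ remains in the closure of $E_2-uv$, we obtain $r_d(G-e)=|E_1|+|E_2|-3=|E(G-e)|$, so $G-e$ is independent. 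Hence $G$ is a circuit.

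For the \emph{only if} direction, let $G$ be a circuit, so $r_d(G)=|E_1|+|E_2|-3$. Each of $E_1-uv,E_2-uv$ is a proper subset of the circuit $E(G)$, hence independent, giving $|E_i|-1\le r_d(G_i)\le |E_i|$. Writing $r_d(G_1\cup G_2)=r_d(G)+\varepsilon$ with $\varepsilon\in\{0,1\}$ (where $\varepsilon=1$ iff $uv$ is outside the closure of $E(G)$), (A) gives $r_d(G_1)+r_d(G_2)=|E_1|+|E_2|-2+\varepsilon$. If $\varepsilon=1$ then exactly one of $G_1,G_2$ is dependent, say $G_2$ with $r_d(G_2)=|E_2|-1$; then $uv$ lies in the closure of $E_2-uv\subseteq E(G)$, forcing $\varepsilon=0$, a contradiction. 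Thus $\varepsilon=0$ and $r_d(G_i)=|E_i|-1$, so each $G_i$ is dependent with a unique circuit through $uv$. It remains to exclude coloops: if some $f\in E_1-uv$ were a coloop of $\scrr_d(G_1)$ then $r_d(G_1-f)=|E_1|-2$, and (A) applied to $G_1-f$ and $G_2$ would give $r_d(G-f)=r_d((G_1-f)\cup G_2)=|E_1|+|E_2|-4<|E(G-f)|$, making $G-f$ dependent and contradicting that $G$ is a circuit. Hence the unique circuit of each $G_i$ is all of $E_i$, so $G_1$ and $G_2$ are both $\scrr_d$-circuits.
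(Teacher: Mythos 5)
Your proposal is correct, but it follows a genuinely different route from the paper. You reduce both directions to the single rank identity (A), $r_d(G_1\cup G_2)=r_d(G_1)+r_d(G_2)-1$, and then argue purely by matroid bookkeeping (unit rank increase, closures, fundamental circuits, coloop exclusion). The paper instead proves necessity by a case analysis on which of $G_1,G_2$ is $\scrr_d$-independent, using Lemma~\ref{lem:intbridge}\ref{it:intbridge:indep} together with the circuit elimination axiom, and proves sufficiency structurally: circuit elimination shows $G$ contains a circuit $G'$, one checks $G'$ is itself a $2$-sum of subgraphs of $G_1,G_2$, and applying necessity to $G'$ forces $G'=G$. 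Your sufficiency argument (computing $r_d(G)$ and $r_d(G-e)$ for every $e$ directly from (A)) avoids that structural step, at the cost of the coloop-exclusion argument in the converse direction; it also closely parallels the technique the paper itself uses later for Lemma~\ref{lem:t-sum}, so it is well suited to generalisation. One remark on efficiency: your geometric proof of (A) via flex-space gluing is correct (the image of each flex space in the space of velocity pairs at $u,v$ is exactly the $(2d-1)$-dimensional hyperplane, by the $\binom{d+1}{2}-\binom{d-1}{2}=2d-1$ count), but it is heavier than necessary. The identity follows in two lines from results already quoted in the paper: extend $uv$ to bases $B_1,B_2$ of $\scrr_d(G_1),\scrr_d(G_2)$; then $B_1\cup B_2$ is independent by Lemma~\ref{lem:intbridge}\ref{it:intbridge:indep} (the intersection $K_2$ is $\scrr_d$-rigid) and spans $G_1\cup G_2$, giving $r_d(G_1\cup G_2)\geq r_d(G_1)+r_d(G_2)-1$, while submodularity of the rank function gives the reverse inequality. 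This is exactly how the paper obtains $r_d(G+uv)=r_d(G_1)+r_d(G_2)-1$ in its necessity argument, and is the "gluing lemma" you anticipated citing. Finally, note that both your proof and the paper's implicitly assume the $2$-sum is non-degenerate, i.e.\ $V_i\setminus\{u,v\}\neq\emptyset$ for $i=1,2$ (your step "$E_i-uv$ is a \emph{proper} subset of $E(G)$" and the paper's claim that $uv$ lies in the unique circuit of the dependent side both fail otherwise); this is a gap in the statement rather than in either argument.
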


\begin{proof}
We first prove necessity. Suppose that $G$ is an $\scrr_d$-circuit.
If  $G_1$ and $G_2$ are both  $\scrr_d$-independent then $G+uv$ is $\scrr_d$-independent by Lemma \ref{lem:intbridge}\ref{it:intbridge:indep}, a contradiction since $G$ is an $\scrr_d$-circuit.
If exactly one of $G_1$ and $G_2$, say $G_1$, is  $\scrr_d$-independent then $uv$ belongs to the unique $\scrr_d$-circuit contained in $G_2$.
We may extend $uv$ to a base of $E_i$, for $i=1,2$, and then apply Lemma \ref{lem:intbridge}\ref{it:intbridge:indep} to obtain $r_d(G+uv)=r_d(G_1)+r_d(G_2)-1$.
Thus we have $r_d(G)=r_d(G+uv)=|E_1|+|E_2|-2=|E|$, a contradiction since $G$ is an $\scrr_d$-circuit.  Hence
$G_1$ and $G_2$ are both $\scrr_d$-dependent. Then the matroid circuit elimination axiom  combined with the fact that $G$ is an $\scrr_d$-circuit imply that $G_1$ and $G_2$ are both $\scrr_d$-circuits.  

We next prove sufficiency. Suppose that $G_1$ and $G_2$ are both $\scrr_d$-circuits. The circuit elimination axiom implies that $G$ is $\scrr_d$-dependent and hence that $G$ contains an $\scrr_d$-circuit $G'=(V',E')$. Since $G_i-uv$ is $\scrr_d$-independent for $i=1,2$, we have $E'\cap E_i\neq \emptyset$. This implies that $G'$ is a 2-sum of $G_1'=(G_1\cap G')+uv$ and $G_2'=(G_2\cap G')+uv$. The proof of necessity in the previous paragraph now tells us that $G_1'$ and $G_2'$ are both $\scrr_d$-circuits. Since $G_i$ is an $\scrr_d$-circuit and $G_i'\subseteq G_i$ we must have  $G_i'=G_i$ for $i=1,2$ and hence $G=G'$. 
\end{proof}

The special cases of Lemma \ref{lem:2-sum} when $d=2,3$ were proved by  Berg and Jord\'an \cite{BJ} and Tay \cite{Tay}, respectively.

\medskip

We next obtain some results on the graphs in $\{B_{d,d-1}\}\cup \{B_{d,d-1}\}\cup \mathcal B_{d,d-1}^+$.
The {\em ($d$-dimensional) degree of freedom} of a graph $G=(V,E)$ with $|V|\geq d+1$ is defined to be the number $d|V|-\binom{d+1}{2}-r_d(G)$, i.e.~the minimum number of edges we need to add to  $G$ to make it  $\scrr_d$-rigid.
 We may apply Lemma \ref{lem:2-sum} to the $\scrr_3$-circuit $K_5$  
to deduce that $B_{3,2}$ is an $\scrr_3$-circuit which has 18 edges and one degree of freedom.
The same argument applied to the $\scrr_4$-circuit $K_6$ implies that $B_{4,2}$ is an $\scrr_4$-circuit with 28 edges and three degrees of freedom.
We can now use Lemma~\ref{lem:coning}\ref{it:coning:circ} to deduce that $B_{d,d-1}$ is an $\scrr_d$-circuit with $\frac{d(d+9)}{2}$ edges and one degree of freedom
and that $B_{d,d-2}$ is an $\scrr_d$-circuit with $d(d+3)$ edges and three degrees of freedom, for all $d\geq 4$.
Similarly, using the fact that $K_{d+3}-\{f,g\}$ is a rigid $\scrr_d$-circuit when $f,g$ are non-adjacent,
we may apply Lemma \ref{lem:2-sum} to the $\scrr_3$-circuits $K_5$  and $K_6-\{f,g\}$, for two non-adjacent edges $f,g$,
to deduce that every graph in $\mathcal B_{3,2}^+$ is an  $\scrr_3$-circuit with 21 edges and one degree of freedom. We can then use Lemma \ref{lem:coning}\ref{it:coning:circ}
to deduce that  if a graph from $\mathcal B_{d,d-1}^+$ is obtained by coning, it is an $\scrr_d$-circuit unless $f$ or $g$ has an end-vertex in $V_1\cap V_2$. Our next result extends this to all graphs in $\scrb_{d,d-1}^+$.
Note that, since every graph in $\scrb_{d,d-1}^+$ has one more vertex and $d$ more edges than  $B_{d,d-1}$, the fact that the graphs  in $\scrb_{d,d-1}^+$ are
$\scrr_d$-circuits will imply that they each have $\frac{d(d+9)}{2}+d$ edges and one degree of freedom.

\begin{lemma}\label{lem:banana}
Every graph in $\{B_{d,d-1},B_{d,d-2}\}\cup \scrb_{d,d-1}^+$ is an $\scrr_d$-circuit.
\end{lemma}

\begin{proof}
We have already seen that $B_{d,d-1}$ and $B_{d,d-2}$ are $\scrr_d$-circuits.
Let 
$G\in \scrb_{d,d-1}^+$
and suppose that $G_1=(V_1,E_1)$, $G_2=(V_2,E_2)$, and $e,f,g$ are as in the definition of $\scrb_{d,d-1}^+$. 
Since 
$G$ has $d|V(G)|-\binom{d+1}{2}$ edges and is
not  $\scrr_d$-rigid (since it is not $d$-connected),
it is $\scrr_d$-dependent.

We will complete the proof by showing that 
$G-h$ is $\scrr_d$-independent for all edges $h$ of 
$G$. If $h$ is incident with a vertex $x\in V_2\setminus V_1$,
then we can reduce 
$G-h$ to $G_1-\{e,f,g\}$ by recursively deleting vertices of degree at most $d$ (starting from $x$).  Since $G_1-\{e,f,g\}$ is $\scrr_d$-independent, Lemma \ref{lem:01ext} and the fact that edge deletion preserves independence now imply that 
$G-h$ is $\scrr_d$-independent.
Thus we may assume that $h\in E_2$.

Suppose that $f,g,h$ do not have a common end-vertex.  Choose a vertex $x\in V_2\setminus V_1$ and let
$H=G-h-x+e$ be the graph  obtained by applying a 1-reduction at $x$.
We can reduce $H$ to $G_1-\{f,g,h\}$ by recursively deleting vertices of degree at most $d$. Since  
$f,g,h$ do not have a common end-vertex, $G_1-\{f,g,h\}$ is $\scrr_d$-independent. We can now use Lemma \ref{lem:01ext} to deduce that
$G-h$ is $\scrr_d$-independent.

Hence we may assume that $f,g,h$ have a common end-vertex $u$. 
The definition of $\scrb_{d,d-1}^+$ now implies that at least one of $f$ and $g$, say $f$, is an edge of $G_1\cap G_2$.
Since $e, f,g$ do not have a common end-vertex, $e$ is not incident with $u$ and hence $e,g,h$ do not have a common end-vertex. 
We can now apply the argument in the previous paragraph with  the roles of $e$ and $f$ reversed to deduce that
$G-h$ is $\scrr_d$-independent.
\end{proof}

\begin{lemma}\label{lem:banana_1ext}
Let $G$ be a graph obtained from $B_{d,d-1}$ by a 1-extension operation.
Then either $G$ is $\scrr_d$-rigid or $G\in \scrb_{d,d-1}^+$.
\end{lemma}
\begin{proof}
Let $v$ be the new vertex added by the 1-extension and consider $B_{d,d-1}=(G_1\cup G_2)-e$ where $G_i\cong K_{d+2}$, $G_1\cap G_2\cong K_{d-1}$ and $e\in E(G_1\cap G_2)$.
If $N_G(v)\subseteq V(G_i)$ for some $i=1,2$, then $G\in \scrb_{d,d-1}^+$.

Hence, we may assume that there exist vertices $v_1\in N_G(v)\cap (V(G_1)\sm V(G_2))$ and
$v_2\in N_G(v)\cap (V(G_2)\sm V(G_1))$.
Note that as $v_1$ and $v_2$ are on different sides of the cut
set $V(G_1)\cap V(G_2)$ of $B_{d,d-1}$, we have $v_1v_2\notin E(B_{d,d-1})$. Let $f$ be the edge of $B_{d,d-1}$ deleted by the 1-extension.
We may use Lemma \ref{lem:intbridge}\ref{it:intbridge:rank} to obtain
\begin{equation*}
r_d(B_{d,d-1}-f+v_1v_2)=r_d(B_{d,d-1}+v_1v_2)=r_d(B_{d,d-1})+1.
\end{equation*}
Since $B_{d,d-1}$ has one degree of freedom, this implies that $B_{d,d-1}-f+v_1v_2$ is $\scrr_d$-rigid. We may now use the fact that $G$ can be obtained from
$B_{d,d-1}-f+v_1v_2$ by a 1-extension operation on the edge $v_1v_2$ and Lemma \ref{lem:01ext} to conclude that
$G$ is $\scrr_d$-rigid.
\end{proof}

Our last two lemmas are rather technical results which we will need in our proof of Theorem \ref{thm:vertex}.

\begin{lemma}\label{lem:comp}
\begin{enumerate}
	\item\label{it:comp:6reg} 
	Every 6-regular graph on 10 vertices is $\scrr_4$-independent.
	\item\label{it:comp:12reg} 
	 Every 12-regular graph on 15 vertices is $\scrr_9$-independent.
\end{enumerate}
\end{lemma}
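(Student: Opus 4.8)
The plan is to use the numerology of the two cases to turn the statement into a square nonsingularity condition. First I would record the edge counts: a $6$-regular graph on $10$ vertices has $30=4\cdot 10-\binom{5}{2}$ edges, and a $12$-regular graph on $15$ vertices has $90=9\cdot 15-\binom{10}{2}$ edges. Thus in both cases (with $d=4$ and $d=9$) the graph $G$ is $d$-tight, so by Lemma~\ref{lem:MW} $\scrr_d$-independence is equivalent to $G$ being minimally $\scrr_d$-rigid, equivalently to the rows of $R(G,p)$ being independent at a generic $p$, equivalently to $G$ having no nonzero generic self-stress. A self-stress of $G=K_n-\bar G$ is precisely a self-stress of $K_n$ that vanishes on $E(\bar G)$, and at a generic $p$ the self-stress space of $K_n$ has dimension $\binom{n}{2}-(dn-\binom{d+1}{2})=\binom{n-d}{2}$. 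Since $n=d+6$ in both parts, this dimension equals $\binom{6}{2}=15$, while $\bar G$ (being cubic on $10$ vertices, resp. $2$-regular on $15$ vertices) has exactly $15$ edges. Hence $G$ is $\scrr_d$-independent if and only if the linear map sending a self-stress of $K_n$ to its $15$ values on $E(\bar G)$ is injective, i.e. a single $15\times 15$ system is nonsingular.

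Two general principles then make the verification tractable. First, since the generic rank of $R(G,p)$ is the maximum of $\rank R(G,p)$ over all realisations, it suffices to exhibit for each $G$ one realisation whose rigidity matrix has rank $|E(G)|$; equivalently, one configuration at which the above restriction map is injective. Second, each family is finite up to isomorphism: part~\ref{it:comp:6reg} is governed by the cubic graphs on $10$ vertices, and part~\ref{it:comp:12reg} by the $2$-regular graphs on $15$ vertices, i.e. the disjoint unions of cycles indexed by the partitions of $15$ into parts of size at least $3$. So it is enough to treat each complement type.

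The reason this needs its own argument, rather than an inductive reduction, is that none of the standard moves apply to these graphs. Because $\delta(G)=6>d+1=5$ (resp. $12>10$), no vertex can be removed by an inverse $0$- or $1$-extension; because $\Delta(G)=6<n-1=9$ (resp. $12<14$), $G$ has no universal vertex, so Lemma~\ref{lem:coning} cannot be applied in reverse. Moreover a short degree count shows that a component on one side of any small vertex cut would need more than $n$ vertices, so $G$ is $3$-connected in part~\ref{it:comp:6reg} and at least $(d+2)$-connected in part~\ref{it:comp:12reg}; in particular $G$ admits no $2$-sum and no separator small enough to feed into Lemma~\ref{lem:intbridge}, and (since $d\ge 3$) high connectivity alone does not force rigidity. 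Thus the gluing and extension lemmas do not reduce the problem, and we must certify each graph directly.

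To execute the verification I would choose an explicit, highly symmetric configuration (for example points in convex position, so that the self-stresses of $K_n$ admit a combinatorial Gale-dual description that exploits the common parameter $n-d=6$) and reduce the nonsingularity of the $15\times 15$ system to a combinatorial condition on $\bar G$, which I would then check holds for every cubic complement (part~\ref{it:comp:6reg}) and every union-of-cycles complement (part~\ref{it:comp:12reg}); the extremal case $\bar G=5K_3$ in part~\ref{it:comp:12reg} is the complete multipartite graph $K_{3,3,3,3,3}$, and should serve as a useful sanity check. The main obstacle is exactly the universal quantifier: with no vertex-peeling, no coning, and no decomposition available, I cannot argue one representative and must instead produce a certificate that is uniform over the whole family, so the crux is to show that \emph{every} cubic, respectively $2$-regular, complement makes the stress-restriction map injective, either by a single structured realisation or by a carefully organised finite computation over the isomorphism types above.
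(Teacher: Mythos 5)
Your proposal is correct and, at its core, takes the same route as the paper: the paper's proof consists exactly of enumerating the finitely many isomorphism types ($21$ six-regular graphs on $10$ vertices, $17$ twelve-regular graphs on $15$ vertices) and certifying $\scrr_d$-independence of each by a computer rank computation, which is precisely the ``carefully organised finite computation over the isomorphism types'' your plan falls back on. Your dual reformulation via the $15\times 15$ stress-restriction determinant, together with the observation that a single full-rank realisation certifies generic independence, is a clean (and computationally lighter) way to organise that same finite check rather than a genuinely different argument.
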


\begin{proof}
There are 21 6-regular graphs on 10 vertices (see OEIS \cite{OEIS} sequence A165627 for the count and references to lists for download).
The number of 12-regular graphs on 15 vertices is 17.
These can be obtained from the fact that the complement of a 12-regular graph on 15 vertices is a 2-regular graph on 15 vertices, i.\,e.\ a graph consisting of disjoint cycles.

Now we need to show that these graphs are indeed $\scrr_d$-independent in the stated dimensions.
We can do so with the help of any computer algebra system. 
For each graph, we choose a vector $p \in \mathbb{R}^{d|V|}$ and compute the rank of $R(G,p)$.
We know that as soon as we find a $p$ such that rank $R(G,p) =|E(G)|$,  we will have $\rank\, R(G,q) =|E(G)|$ for all generic $q$.
We did this by  taking a random choice for $p$ and checking that rank $R(G,p) =|E(G)|$. (Due to generic rigidity, almost every random choice will do.)
\end{proof}

\begin{lemma}\label{lem:deg23} 
Suppose that $G=(V,E)$ is a graph with $|V|\geq 11$, minimum degree two and maximum  degree three. Then there exist vertices $x,y\in V$ with $d(x)=2$, $d(y)=3$ and $\dist(x,y)\geq 3$.
\end{lemma}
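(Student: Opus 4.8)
The plan is to argue by contradiction. Suppose no such pair exists; equivalently, writing $A$ and $D$ for the sets of vertices of degree $2$ and degree $3$ respectively, every vertex of $A$ lies within distance $2$ of every vertex of $D$. Both sets are nonempty and partition $V$, and since $\sum_{v}d(v)=2|V|+|D|$ must be even, $|D|$ is even and so $|D|\ge 2$. For a vertex $v$ let $N_{\le 2}(v)$ denote the set of vertices at distance at most $2$ from $v$; since $\Delta(G)\le 3$ we have $|N_{\le 2}(v)|\le 7$ when $d(v)=2$ and $|N_{\le 2}(v)|\le 10$ when $d(v)=3$.

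First I would fix a vertex $x\in A$ with $N(x)=\{a,b\}$, let $S$ be the set of vertices at distance exactly $2$ from $x$ (so $|S|\le 4$), and set $W=V\setminus N_{\le 2}(x)$. The hypothesis forces $D\subseteq \{a,b\}\cup S$, while every vertex of $W$ has degree $2$, lies at distance at least $3$ from $x$, has all its neighbours in $S\cup W$, and $|W|=|V|-3-|S|\ge |V|-7\ge 4$. The engine of the argument is that $W$ communicates with the rest of $G$ only across the small separator $S$, yet every vertex of $W$ must still reach every vertex of $D$ within distance $2$; since each $s\in S$ has degree at most $3$ and already spends an edge towards $\{a,b\}$ (or $x$), the number of edges $S$ can send into $W$ is tightly bounded, and I would play this capacity against the reachability demands.

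I would then split into cases according to $k=|\{a,b\}\cap D|$. If $k=0$ then $a,b$ both have degree $2$, so each contributes only one vertex to $S$, giving $|S|\le 2$ and $|W|\ge |V|-5\ge 6$; but some $a'\in S\cap D$ must then lie within distance $2$ of all $\ge 6$ vertices of $W$, and a short count (one of $a'$'s three edges runs back towards $x$ to its degree-$2$ neighbour) shows $N_{\le 2}(a')$ meets $W$ in at most $4$ vertices, a contradiction. If $k=1$, say $a\in D$, then $|S|\le 3$, and every $w\in W$ needs a neighbour in $S\cap N(a)$, a set of size $\le 2$ whose vertices each send at most $2$ edges into $W$; the resulting bound $|W|\le 4$ forces $|V|\le 3+3+4=10$, a contradiction. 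If $k=2$ then each $w\in W$ needs a neighbour in $S\cap N(a)$ and in $S\cap N(b)$: when these two sets are disjoint, every $w$ must spend both of its edges reaching them, which pins down $|V|=11$ and $|S|=4$ and leaves a degree-$3$ vertex of $S$ unreachable from some $w$; when $a$ and $b$ share a neighbour $s^{\ast}\in S$ we instead get $|S|\le 3$, hence $|W|\ge 5$, and since $s^{\ast}$ can absorb only one edge from $W$, at least $8$ edges must run from $W$ into $S$ while $S$ can take at most $5$, again a contradiction.

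The main obstacle is the last case, $k=2$: a degree-$3$ vertex $s^{\ast}$ adjacent to both $a$ and $b$ lets a vertex of $W$ reach two degree-$3$ vertices through a single edge, so the naive edge count does not close directly. The fix is to separate the sub-case where $a$ and $b$ have a common neighbour in $S$ (which shrinks $|S|$, hence enlarges $|W|$ and tightens the capacity count) from the sub-case where they do not (which forces the extremal $11$-vertex configuration and exhibits a concrete unreachable vertex). In every case the bounded capacity of the separator $S$, weighed against the at least four degree-$2$ vertices of $W$ that must each reach every degree-$3$ vertex within distance $2$, delivers the contradiction and completes the proof.
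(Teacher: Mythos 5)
Your proposal is correct, and it takes a genuinely different route from the paper's proof. The paper argues from both degree classes at once: every degree-3 vertex lies within distance 2 of a fixed degree-2 vertex (so there are at most six of them), every degree-2 vertex lies within distance 2 of a fixed degree-3 vertex (so there are at most six of them), parity forces exactly six degree-3 vertices, hence every degree-2 vertex has two degree-3 neighbours, and the proof ends by exhibiting a specific pair asserted to be at distance at least 3. You instead never leave the second neighbourhood of one degree-2 vertex $x$: you partition $V$ into $\{x\}\cup\{a,b\}\cup S\cup W$ and close every case by weighing demand (each $w\in W$ must reach $a$, and when $d(b)=3$ also $b$, through a neighbour in $S\cap N(a)$, resp.\ $S\cap N(b)$) against the capacity of the separator (each vertex of $S$ spends an edge on $\{a,b\}$, so offers $W$ at most two edges, and a common neighbour of $a$ and $b$ at most one). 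I checked the numerical claims in all four cases ($k=0$; $k=1$; $k=2$ with the two sets disjoint; $k=2$ with a common neighbour $s^{\ast}$), and they hold; in the extremal disjoint $k=2$ case the forced structure ($|V|=11$, $|S|=4$, every vertex of $S$ having its degree exhausted by $a$ or $b$ plus two vertices of $W$) makes the final check immediate, since a vertex $w\in W$ adjacent to $s_1\in S\cap N(a)$ and $t_1\in S\cap N(b)$ is then at distance at least 3 from the other vertex of $S\cap N(a)$. A concrete advantage of your route is that it is self-contained at the finish: the paper's last sentence asserts $\dist(w,y)\geq 3$ for its chosen pair without argument, and justifying that claim requires an additional observation of exactly the counting type you make explicit (the tight distance-2 count around each degree-2 vertex forces every degree-3 vertex adjacent to a degree-2 vertex to have only one degree-2 neighbour, which caps the number of degree-2 vertices at three). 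What the paper's approach buys is brevity --- two global counts and a parity argument in place of your four-case analysis.
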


\begin{proof} 
Assume $G=(V,E)$ is a counterexample to the lemma.
Choose a vertex  $v\in V$ of degree $2$. Then there are at most 6 vertices at distance 1 or 2 from $v$. Hence $G$ has at most 6 vertices of degree 3. Now choose a vertex $u\in V$ of degree 3. Each neighbour of $u$ is either a vertex of degree 2 which has at most one other neighbour of degree 2 or a vertex of degree 3 which has at most two other neighbours of degree 2. Therefore $G$ has at most 6 vertices of degree 2. 
If there does not exist 6 vertices of degree 3 in $G$ then the number of vertices of degree 3 in $G$ is at most 4 by parity, and we would have $|V|\leq 10$. Hence there are exactly 6 vertices of degree 3 and $v$ is adjacent to two vertices of degree 3. Since $v$ is an arbitrary vertex of degree two, every vertex of degree 2 is adjacent to two vertices of degree 3. Now choose $w$ to be a vertex of degree 3 at distance 2 from $v$ and a vertex $y\neq v$, of degree 2, not adjacent to $w$. Then $\dist(w,y)\geq 3$.
\end{proof}

\section{Main results}
\label{sec:main}

We will prove Theorem \ref{thm:vertex}, Corollary \ref{cor:edges} and Corollary  \ref{cor:tight}.

\subsection{Proof of Theorem \ref{thm:vertex}} 
We proceed by contradiction. Suppose the theorem is false and choose a counterexample $G=(V,E)$ such that $d$ is as small as possible and, subject to this condition, $|V|$ is as small as possible. Since all $\scrr_d$-circuits are $\scrr_d$-rigid when $d\leq 2$,
we have $d\geq 3$. Since $G$ is an $\scrr_d$-circuit, $G-v$ is $\scrr_d$-independent for all $v\in V$, and we can now use the fact that $0$-extension preserves $\scrr_d$-independence (by Lemma \ref{lem:01ext}) to deduce that 
$\delta(G)\geq d+1$. 
Since $G$ is a flexible $\scrr_d$-circuit, $G$ is $d$-sparse by Lemma \ref{lem:MW}.

\smallskip

\noindent \textbf{\boldmath
Case 1: $d(v)=d+1$ for some $v\in V$}.

Since  $G$ does not  contain the rigid $\scrr_d$-circuit $K_{d+2}$, $v$ has  two non-adjacent neighbours $v_1,v_2$.
If $H=G-v+v_1v_2$ was $\scrr_d$-independent then $G$ would be $\scrr_d$-independent by Lemma~\ref{lem:01ext}. Hence $H$ contains an $\scrr_d$-circuit $C$.
Since $C$ has at most $d+5$ vertices, the minimality of $G$ implies that either $C$ is $\scrr_d$-rigid, or $C=B_{d,d-1}$ and $C$ is a spanning subgraph of $H$. 
If the latter alternative occurs then Lemma \ref{lem:banana_1ext} would imply that $G$ contains a circuit $C'\in \mathcal{B}_{d,d-1}^+$ and we would contradict the choice of $G$.
Hence $C$ is $\scrr_d$-rigid and $G$ contains the minimally $\scrr_d$-rigid subgraph $C-v_1v_2$. Since $C$ is a rigid $\scrr_d$-circuit, we have $|V(C)|\geq d+2$.
Let $G'$ be a 
minimally $\scrr_d$-rigid subgraph of $G$ 
with at least $d+2$ vertices,
which is maximal with respect to inclusion, and put $X=V(G)\sm V(G')$.
 Then 
$1\leq |X|\leq 4$. 
If some vertex in $X$ had at least $d$ neighbours in $G'$, then we could create a larger $\scrr_d$-rigid subgraph by performing a 0-extension.
Hence each $x\in X$ has at most $d-1$ neighbours in $G'$. Since $G$ has minimum degree at least $d+1$, each $x\in X$ has at least two neighbours in $X$ and we have $3\leq |X|\leq 4$. 

\noindent \smallskip

\noindent \textbf{\boldmath Subcase 1.1:} $|X|=3$.
Then $G[X]=K_3$. In addition, $G'$ is a minimally rigid graph on $d+2$ or $d+3$ vertices so either $G'=K_{d+2}-e$ for some edge $e$, or $G'=K_{d+3}-\{e,f,g\}$ for some  edges $e,f,g$ which are not all incident with the same vertex. If $|N_G(X)|\geq d$ then we could construct an $\scrr_d$-rigid spanning subgraph of $G$ by Lemma \ref{lem:rigunion}.
Hence $|N_G(X)| =d-1$. Since $G$ does not contain a copy of $K_{d+2}$, at least one edge, say $e$, with its end-vertices in $N_G(X)$ is missing from $G$. This gives $G=B_{d,d-1}$ when $G'=K_{d+2}-e$,
so we must have  $G'=K_{d+3}-\{e,f,g\}$. Since $G\not\in \scrb_{d,d-1}^+$, $f$ and $g$ have a common end-vertex $u$, and are both have at least one endvertex in $V(G')\sm N_G(X)$. Since $\delta(G)\geq d+1$, we must have $u\in 
N_G(X)$. Then  the graph obtained from $G$ by deleting all the edges from $u$ to its neighbours in $V(G')\setminus N_G(X)$ is a copy of $B_{d,d-2}$ in $G$. This contradicts the choice of $G$

\smallskip

\noindent \textbf{\boldmath Subcase 1.2: $|X|=4$}.
Then $C_4\subseteq G[X]\subseteq K_4$ and $|V(G')|=d+2$. Since $G'$ is minimally rigid, we have $G'= K_{d+2}-e$ for some edge $e$. 

\begin{claim}\label{cla:Xneighbours}
$N_G(X)=V(G')$.
\end{claim}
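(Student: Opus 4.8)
The plan is to argue by contradiction: suppose some vertex $w\in V(G')$ has no neighbour in $X$. Since $w$'s only possible neighbours lie in $V(G')$ and $\delta(G)\ge d+1$, we must have $N_G(w)=V(G')\setminus\{w\}$ and $d_G(w)=d+1$; in particular $d_{G'}(w)=d+1$, so $w$ is not an end-vertex of $e$ (the two ends of $e$ have degree $d$ in $G'$ and therefore each need a neighbour in $X$, which incidentally re-establishes that both ends of $e$ already lie in $N(X)$). A direct edge count here is deceptive: deleting $w$ leaves $G-w$ $\scrr_d$-independent with exactly one edge fewer than a $d$-tight graph, so $d$-sparsity alone yields no contradiction and a matroidal argument is unavoidable.

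First I would perform the unique available $1$-reduction at $w$. The set $N_G(w)=V(G')\setminus\{w\}$ induces $K_{d+1}-e$, whose only non-adjacent pair is the pair of ends of $e$, so the $1$-reduction must reinsert $e$, giving $G^{*}=G-w+e$ on $d+5$ vertices with $G^{*}[V(G')\setminus\{w\}]=K_{d+1}$. Because $G$ is a $1$-extension of $G^{*}$ and $G$ is $\scrr_d$-dependent, Lemma~\ref{lem:01ext} forces $G^{*}$ to be $\scrr_d$-dependent; let $C$ be the (fundamental) $\scrr_d$-circuit of $e$ in $G^{*}$. Since $G$ is a circuit it has no proper dependent subgraph, so $C\not\subseteq G-w$ and hence $e\in E(C)$; and since $K_{d+1}$ is $\scrr_d$-independent, $C$ must use a vertex of $X$. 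As $|V(C)|\le d+5<d+6$, the minimality of the counterexample applies: $C$ is either $\scrr_d$-rigid or equal to $B_{d,d-1}$, the only flexible $\scrr_d$-circuit in our lists with at most $d+5$ vertices.

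To finish I would reverse the reduction. Re-adding $w$ to $C$ as a $1$-extension on $e$ (joining $w$ to the ends of $e$ and to $d-1$ further vertices of $W:=V(G')\setminus\{w\}$) produces a subgraph $\widehat{C}\subseteq G$; a short rank count shows that when $C$ is rigid, $\widehat{C}$ is $\scrr_d$-rigid (a minimally rigid $C-e$ carrying an over-braced apex $w$), so $G$ contains a rigid subgraph on $|V(C)|+1$ vertices. If $\widehat{C}$ spans $V$ this makes $G$ $\scrr_d$-rigid, contradicting flexibility; otherwise $\widehat{C}$ is a rigid subgraph meeting $V(G')$ in at least $d$ vertices, and its union with $G'$ contradicts the maximality of $G'$ via Lemma~\ref{lem:intbridge}\ref{it:intbridge:rig}. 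The remaining case $C=B_{d,d-1}$ forces $V(C)=V(G^{*})=V\setminus\{w\}$, so $C$ spans $G-w$; here I would read off the $B_{d,d-1}$ splitting of $G-w$ and recombine it with $w$ and the clique on $V(G')$ to exhibit either a rigid spanning subgraph of $G$ or a copy of a listed graph, both contradictions.

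The hard part is the reconstruction step, specifically guaranteeing that $w$ can be reattached with its full degree $d+1$ inside $V(C)$, i.e.\ that $C$ meets the clique $W$ in all $d+1$ of its vertices; a low-degree reattachment merely produces an $\scrr_d$-independent graph and no contradiction, because the dependency of $C$ genuinely uses the edge $e$. Controlling $|V(C)\cap W|$ (equivalently, showing the fundamental circuit of $e$ drags in the whole clique, or else working with $C\cup G^{*}[W]$ in its place), together with the separate treatment of $C=B_{d,d-1}$, is where the real work lies.
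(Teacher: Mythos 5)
Your first half is sound, and it actually arrives at the same object as the paper by a slightly cleaner route: from $N_G(w)=V(G')\sm\{w\}$, $d_G(w)=d+1$ and the fact that both ends of $e$ lie in $N(X)$, you correctly conclude that $G^{*}=G-w+e$ is the unique available $1$-reduction, that $G^{*}$ is $\scrr_d$-dependent with a unique circuit $C\ni e$ meeting $X$, and that by minimality $C$ is rigid or $C=B_{d,d-1}$. (The paper's circuit, found inside $G''[Y]$ with $G''=G+e-f$, is this same $C$, since $w\notin Y$.) The gap is exactly where you say it is, and it is genuine: nothing you have established gives $|V(C)\cap W|\geq d$. Since $|V(C)|\geq d+2$ and $|X|=4$, all you know is $|V(C)\cap W|\geq d-2$, and the small-intersection cases cannot be dismissed by any rigidity count, because they are precisely the configurations in which $G$ turns out to be $B_{d,d-2}$ or a member of $\scrb_{d,d-1}^+$; no reconstruction of a rigid subgraph can exist there. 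Your two suggested repairs fail as stated: the fundamental circuit of $e$ does \emph{not} in general contain the whole clique $W$ (when $|V(C)\cap W|=d-2$ one has $C=K_{d+2}$ using only $d-2$ vertices of $W$), and working with $C\cup G^{*}[W]$ cannot help because that graph lies inside $G^{*}$, whose unique circuit is $C$, so no new dependence can be extracted from it.

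The missing idea is to leave $G^{*}$ and bring in the \emph{second} circuit through $e$, the one that uses $w$: since $e\notin E(G)$ and both ends of $e$ lie in $V(G')$, we have $G'+e\cong K_{d+2}$. The paper applies circuit elimination to $C$ and $G'+e$ along $e$ to get that $(C-e)\cup G'$ is $\scrr_d$-dependent; since this is a subgraph of $G$ and $G$ is a circuit, it must \emph{equal} $G$. That single step is what pins down $V(C)$: every edge of $G$ incident with $X$ lies in $C$, so $X\subseteq V(C)$ and $N_G(X)\subseteq V(C)$, after which the trichotomy on $|N_G(X)|$ finishes the proof ($|N_G(X)|\geq d$ makes $G$ rigid via Lemma~\ref{lem:intbridge}\ref{it:intbridge:rig}, noting $r_d(G)=r_d(G+e)$ because $e$ lies in the circuit $G'+e$; $|N_G(X)|=d-2$ forces $C=K_{d+2}$ and $G=B_{d,d-2}$; $|N_G(X)|=d-1$ forces $C=K_{d+3}-f-g$ and $G\in\scrb_{d,d-1}^+$), each outcome contradicting the choice of $G$. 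An equivalent way to close your argument without invoking the elimination axiom abstractly is to observe that $(C\cup(G'+e))-e$ visibly contains one of the known circuits $B_{d,d-2}$, $B_{d,d-1}$ or a member of $\scrb_{d,d-1}^+$ in the small-intersection cases, again contradicting the fact that a circuit has no proper dependent subgraph; but either way the union with $G'+e$ is indispensable, and it never appears in your proposal. Your treatment of $C=B_{d,d-1}$ is likewise only a sketch; there the fix is an edge count ($d$-sparsity forces $E(G)=(E(C)\sm\{e\})\cup\{wu \colon u\in W\}$), so $G$ is a $1$-extension of $B_{d,d-1}$ and lies in $\scrb_{d,d-1}^+$.
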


\begin{proof}[Proof of claim]
Suppose, for a contradiction, that $N_G(X)\neq V(G')$. Let $Y=X\cup N_G(X)$. Then $G[Y]$ is a proper subgraph of $G$ so is $\scrr_d$-independent.
If $G[N_G(X)]$ was complete, then $G$ would be $\scrr_d$-independent by Lemma~\ref{lem:intbridge}\ref{it:intbridge:indep},
since $G=G'\cup G[Y]$, $G'$ and $G[Y]$ are $\scrr_d$-independent, and
$G'\cap G[Y]=G[N(X)]$ is complete. 
Hence $G[N_G(X)]$ is not complete.  Since $G'= K_{d+2}-e$, this implies that both end-vertices of $e$ belong to $N_G(X)$.
Choose a vertex $w\in V(G')\sm N_G(X)$ and an edge $f$ of $G'$ which is incident with $w$. Consider the graph $G''=G+e-f$.

Suppose $G''[Y]$ is $\scrr_d$-independent. Since $G''[N_G(X)]$ induces a complete graph, we can use Lemma~\ref{lem:intbridge}\ref{it:intbridge:indep}
as above to deduce that $G''$ is $\scrr_d$-independent.
Then $G'+e= K_{d+2}$ is the unique $\scrr_d$-circuit  in $G''+f$ and hence  $G=G''+f-e$ is $\scrr_d$-independent.
This  contradicts the choice of $G$.  Hence $G''[Y]$ is $\scrr_d$-dependent.

Let $C$ be an $\scrr_d$-circuit in $G''[Y]$. Since $G''-e=G-f$ is a proper subgraph of $G$ and $G$  is an $\scrr_d$-circuit, we have $e\in E(C)$.
We also have $w\not\in V(C)$ since $w\not\in Y$.

Suppose $C=B_{d,d-1}$. Then $V(C)= V(G'')\setminus \{w\}=V(G)\setminus\{w\}$. 
Since $E(C)\setminus E(G)=\{e\}$, we may apply a 1-extension to $C$ by adding $w$ and its $d+1$ neighbours in $G$ and deleting $e$, to obtain a spanning subgraph of $G$.
By Lemma \ref{lem:banana_1ext}, this spanning subgraph of $G$ is either $\scrr_d$-rigid (implying that $G$ is $\scrr_d$-rigid) or it is a member of $\scrb_{d,d-1}^+$.
Both of these possibilities contradict the choice of $G$. Thus $C\neq B_{d,d-1}$ and the minimality of $G$ now implies that $C$ is rigid.

Since $G'+e= K_{d+2}$ and  $e\in E(C)\cap E(G'+e)$, the matroid circuit elimination axiom implies that  $(C-e)\cup G'$ is $\scrr_d$-dependent.
Since $(C-e)\cup G'\subseteq G$, we must have $(C-e)\cup G'=G$. This implies that $X$ and all edges of $G$ incident to $X$ are contained in $C$. 
Thus $N_G(X)\subset V(C)$. If $|N_G(X)|\geq d$, then $G=G'\cup (C-e)$ would be  rigid by Lemma~\ref{lem:intbridge}\ref{it:intbridge:rig}. 
Hence $|N_G(X)|\leq d-1$. If $|N_G(X)|=d-2$, then $C=K_{d+2}$ and $G=B_{d,d-2}$. Hence  $|N_G(X)|=d-1$.
Then $C=K_{d+3}-f-g$ for two non-adjacent edges $f,g$ and $G\in \scrb_{d,d-1}^+$.
This contradicts the choice of $G$ and completes the proof of  the claim.
\end{proof}

Suppose $G[X]=C_4$. Since $\delta(G)=d+1$ and no vertex of $X$ has more than $d-1$ neighbours in $G'$,  each vertex of $X$ has degree $d+1$ in $G$.
Claim~\ref{cla:Xneighbours} and the facts that $|N_G(X)|=|V(G')|=d+2$ and each vertex of
$X$ has $d-1$ neighbours in $V(G')$, imply that there exists a vertex  $u\in X$ such that $|N_G(X-u)\cap V(G')|\geq d$.
We can  perform a 1-reduction of $G$ which deletes $u$ and adds an edge between the two neighbours of $u$ in $X$. We can now apply Lemma \ref{lem:rigunion} to the resulting graph $H$ on $d+5$ vertices to deduce that $H$ is $\scrr_d$-rigid.
This implies that $G$ is $\scrr_d$-rigid and contradicts the choice of $G$.
Hence $G[X]\neq C_4$.

Suppose $G[X]=C_4+f$ for some edge $f=wx$. Then $w$ and $x$  have  degree $d+1$ or $d+2$ in $G$ and the vertices in $X\sm \{w,x\}$ have degree $d+1$.
If  $d_G(w)=d_G(x)=d+2$ then $G$ would have more than $d|V|-\binom{d+1}{2}$ edges, so could not be a flexible $\scrr_d$-circuit. Hence we may
assume that  $d_G(w)=d+1$. Construct $H$ from $G$ by performing a 1-reduction which deletes $w$ and adds an edge between the two non-adjacent neighbours of $w$ in
$X$. If  $d_G(x)=d+1$, then $x$ would have degree $d$ in $H$ and we could reduce $H$ to $G'$ by recursively deleting the remaining three vertices of $X$ beginning with $x$,  so that every deleted vertex has degree at most $d$.
Since $G'$ is $\scrr_d$-independent this would imply that $G$ is $\scrr_d$-independent and contradict the choice of $G$.
Hence  $d_G(x)=d+2$.  We can now apply Lemma \ref{lem:rigunion} to deduce that either $H$ is $\scrr_d$-rigid
or $|N_G(X-w)\cap V(G')|=d-1$ and $H$ is $B_{d,d-1}$. The first alternative would imply that $G$ is $\scrr_d$-rigid, and
the second alternative would imply that either $G$ is $\scrr_d$-rigid or $G\in \scrb_{d,d-1}^+$ by Lemma \ref{lem:banana_1ext}. Both alternatives 
 contradict the choice of $G$.
 
Hence $G[X]\neq C_4$.
Then each vertex in $X$ has degree $d+1$ or $d+2$ in $G$. In addition, at most two vertices  of $X$ can have degree $d+2$ in $G$, otherwise $G$ would have more than $d|V|-\binom{d+1}{2}$ edges and could not be a flexible circuit.
Let $\hat G$ be obtained from $G$ by adding edges from vertices in $X$ to vertices in $G'$ in such a way that $X$ has exactly two  vertices of degree $d+1$ and exactly two  vertices of degree $d+2$ in $\hat G$.
We will show that $G$ is $\scrr_d$-independent by proving that $\hat G$ is minimally $\scrr_d$-rigid.

Since $N_{\hat G}(X)=V(G')$ by Claim \ref{cla:Xneighbours}, we may choose vertices $x,y\in X$ such that $x$ has degree $d+1$,
$y$ has degree $d+2$ and some vertex $w\in V(G')$ is a neighbour of $x$ in $\hat  G$ but not $y$. Let $X=\{x,y,z,t\}$ where $z$ has degree $d+2$ and $t$ has degree $d+1$ in $\hat G$.
We can construct $\hat G$ from $G'$ by first performing a 0-extension which adds $y$ and all edges from $y$ to its neighbours in $G'$ as well as to $w$,
then add $z$ and then $t$ by successive 0-extensions, and finally add $x$ by a 1-extension which removes the edge $yw$. see Figure \ref{fig:case1}. Since $G'$ is minimally $\scrr_d$-rigid this implies that $\hat G$ is also minimally $\scrr_d$-rigid. This contradicts the fact that $G$ is an $\scrr_d$-circuit and completes the proof of Case 1.

\begin{figure}
\begin{center}
    \begin{tikzpicture}
      \draw[genericgraph,pattern=north east lines,pattern color=black!10!white] (-0.6,0) circle[x radius=0.8cm, y radius=1.75cm];
			\coordinate (z) at (1.75,0.5) {};
			\coordinate (x) at (1.75,-0.5);
			\node[vertex,rotate around=-60:(z),label={[labelsty]60:$y$}] (y) at (x) {};
			\node[vertex,label={[labelsty]left:$w$}] (cx) at (-0.55,-1.2) {};
			\node[] (cy) at (-1.2,-0.35) {};
			\draw[edge] (y)to node[labelsty,above,pos=0.9] {$d-1$} (cy);
			\draw[edge] (y)edge($(cy)-(0,0.2)$) (y)edge($(cy)-(0,0.4)$) (y)edge(cx);
    \end{tikzpicture}
    \quad
    \begin{tikzpicture}
      \draw[genericgraph,pattern=north east lines,pattern color=black!10!white] (-0.6,0) circle[x radius=0.8cm, y radius=1.75cm];
			\node[vertex,label={[labelsty]above:z}] (z) at (1.75,0.5) {};
			\coordinate (x) at (1.75,-0.5);
			\node[vertex,rotate around=-60:(z),label={[labelsty]60:$y$}] (y) at (x) {};
			\node[vertex,rotate around=60:(y),label={[labelsty]60:$t$}] (t) at (z) {};
			\node[vertex,label={[labelsty]left:$w$}] (cx) at (-0.55,-1.2) {};
			\node[] (cz) at (-1,0.5) {};
			\node[] (cy) at (-1.2,-0.35) {};
			\node[] (ct) at (-0.5,1.5) {};	
			\draw[edge] (y)edge(z) (y)edge(t) (z)edge(t);
			\draw[edge] (y)to node[labelsty,above,pos=0.9] {$d-1$} (cy);
			\draw[edge] (y)edge($(cy)-(0,0.2)$) (y)edge($(cy)-(0,0.4)$) (y)edge(cx);
			\draw[edge] (z)to node[labelsty,below,pos=0.75] {$d-1$} (cz);
			\draw[edge] (z)edge($(cz)+(0,0.2)$) (z)edge($(cz)+(0,0.4)$);
			\draw[edge] (t)to node[labelsty,above] {$d-2$} (ct);
			\draw[edge] (t)edge($(ct)-(0,0.2)$) (t)edge($(ct)-(0,0.4)$);
    \end{tikzpicture}
    \quad
    \begin{tikzpicture}
      \draw[genericgraph,pattern=north east lines,pattern color=black!10!white] (-0.6,0) circle[x radius=0.8cm, y radius=1.75cm];
			\node[vertex,label={[labelsty]above:$z$}] (z) at (1.75,0.5) {};
			\node[vertex,label={[labelsty]below:$x$}] (x) at (1.75,-0.5) {};
			\node[vertex,rotate around=-60:(z),label={[labelsty]60:$y$}] (y) at (x) {};
			\node[vertex,rotate around=60:(y),label={[labelsty]60:$t$}] (t) at (z) {};
			\node[vertex,label={[labelsty]left:$w$}] (cx) at (-0.55,-1.2) {};
			\node[] (cz) at (-1,0.5) {};
			\node[] (cy) at (-1.2,-0.35) {};
			\node[] (ct) at (-0.5,1.5) {};
			\draw[edge] (x)edge(y) (x)edge(z) (x)edge(t) (y)edge(z) (y)edge(t) (z)edge(t);
			\draw[edge] (x)to node[labelsty,below] {$d-2$} ($(cx)-(0,0.4)$);
			\draw[edge] (x)edge($(cx)-(0,0.2)$) (x)edge(cx);
			\draw[edge] (y)to node[labelsty,above,pos=0.9] {$d-1$} (cy);
			\draw[edge] (y)edge($(cy)-(0,0.2)$) (y)edge($(cy)-(0,0.4)$);
			\draw[edge] (z)to node[labelsty,below,pos=0.75] {$d-1$} (cz);
			\draw[edge] (z)edge($(cz)+(0,0.2)$) (z)edge($(cz)+(0,0.4)$);
			\draw[edge] (t)to node[labelsty,above] {$d-2$} (ct);
			\draw[edge] (t)edge($(ct)-(0,0.2)$) (t)edge($(ct)-(0,0.4)$);
    \end{tikzpicture}
\end{center}
\caption{Construction of $\hat G$ in the proof of Case 1.}
\label{fig:case1}
\end{figure}

\medskip
\noindent
\textbf{\boldmath Case 2: $\delta(G)\geq d+2$}.

Choose $v\in V$ with $d(v)=\Delta(G)$. If $G-v$ was $\scrr_{d-1}$-independent then $G$ would be $\scrr_d$-independent by Lemma \ref{lem:coning}. This is impossible since $G$ is an $\scrr_d$-circuit. Hence $G-v$ contains an $\scrr_{d-1}$-circuit $C$.
By the minimality of $d$, $C$ is $\scrr_{d-1}$-rigid or $C\in \{B_{d-1,d-2}, B_{d-1,d-3}\}\cup \scrb_{d-1,d-2}^+$. 

\begin{claim}\label{cla:G-v}
$G-v$ is $\scrr_{d-1}$-rigid.
\end{claim}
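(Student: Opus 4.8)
The plan is to prove the slightly stronger statement that $G-v$ contains a spanning $\scrr_{d-1}$-rigid subgraph, by starting from a rigid ``seed'' inside the circuit $C$ and absorbing the remaining vertices one at a time. First I would observe that in every case $C$ contains an $\scrr_{d-1}$-rigid subgraph on at least $d+1$ vertices: if $C$ is $\scrr_{d-1}$-rigid this is $C$ itself, while each graph in the list $\{B_{d-1,d-2},B_{d-1,d-3}\}\cup\scrb_{d-1,d-2}^+$ contains a copy of $K_{d+1}$ or of $K_{d+1}-e$, both of which are $\scrr_{d-1}$-rigid on exactly $d+1$ vertices (indeed $K_{d+1}$ is an $\scrr_{d-1}$-circuit and $K_{d+1}-e$ is minimally $\scrr_{d-1}$-rigid). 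So I may fix a maximal $\scrr_{d-1}$-rigid subgraph $M$ of $G-v$ with $|V(M)|\geq d+1$ and aim to show $V(M)=V(G-v)$.

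The engine of the argument is the fact that if a vertex $x\notin V(M)$ has at least $d-1$ neighbours in $M$, then $M$ together with $x$ and these edges is again $\scrr_{d-1}$-rigid (a $0$-extension in dimension $d-1$, which preserves $\scrr_{d-1}$-rigidity; adding any further edges inside a rigid graph preserves rigidity as well). By maximality of $M$, every $x\in X:=V(G-v)\setminus V(M)$ therefore has at most $d-2$ neighbours in $M$. Since we are in Case~2 we have $\delta(G)\geq d+2$, so $d_{G-v}(x)\geq d+1$ for every such $x$, and hence each $x\in X$ has at least three neighbours inside $X$. As $|V|\leq d+6$ and $|V(M)|\geq d+1$ we get $|X|\leq 4$, and combined with the minimum-internal-degree-three condition this forces $|X|=4$, $G[X]=K_4$ and $|V(M)|=d+1$. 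Moreover any $x\in X$ not adjacent to $v$ would satisfy $d_{G-v}(x)\geq d+2$ and hence have at least four neighbours in the four-vertex set $X$, which is impossible; so in fact $X\subseteq N_G(v)$, every $x\in X$ has degree exactly $d+1$ in $G-v$, and each has exactly $d-2$ neighbours in $M$.

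It then remains to show that this residual configuration --- a $K_4$ attached to an $\scrr_{d-1}$-rigid graph on $d+1$ vertices, in which each vertex of the $K_4$ has exactly $d-2$ neighbours in the rigid part --- is itself $\scrr_{d-1}$-rigid. This would contradict the maximality of $M$ and force $X=\emptyset$, giving that $G-v=M$ is $\scrr_{d-1}$-rigid. I would establish it by exhibiting $G-v$ as the result of a sequence of $0$- and $1$-extensions applied to $M$ in dimension $d-1$, absorbing the four vertices of $X$ in a suitable order and using the internal $K_4$-edges to supply the missing neighbour whenever a vertex has only $d-2$ neighbours in the current rigid part, mirroring the explicit $G[X]=K_4$ construction of Case~1 one dimension lower and appealing to Lemma~\ref{lem:rigunion} and Lemma~\ref{lem:01ext}. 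I expect this final extension analysis to be the main obstacle: the attachment counts in dimension $d-1$ fall one short of what a single $0$-extension requires, so the order of absorption and the bookkeeping of which $K_4$-edges are created at each step must be arranged carefully, exactly as in the $\hat G$ construction of Case~1.
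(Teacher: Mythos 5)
Your structural analysis is sound and in fact runs parallel to the paper's own argument: extracting a rigid seed on at least $d+1$ vertices from $C$, using maximality of $M$ to bound each outside vertex to at most $d-2$ neighbours in $M$, and concluding that $X$ induces $K_4$ with $|V(M)|=d+1$ and every $x\in X$ having exactly $d-2$ neighbours in $M$ all match the paper (your uniform treatment of the flexible-$C$ case via a rigid seed is a mild streamlining of the paper, which handles that case by a separate edge-adding argument). The gap is in the final step. The ``residual configuration'' you want to prove rigid is \emph{not} always rigid: if all four vertices of the $K_4$ have the \emph{same} set $N$ of $d-2$ neighbours in $M$, then $G-v$ is the union of the two graphs $G[V(M)]$ and $G[X\cup N]$, which share only $|N|=d-2=(d-1)-1$ vertices and each have vertices outside the intersection; by Lemma \ref{lem:intbridge}\ref{it:intbridge:rank} (applied in dimension $d-1$) adding an edge between the two sides increases the rank, so $G-v$ is flexible in $\R^{d-1}$. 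Hence no sequence of $0$- and $1$-extensions can succeed in that case, and the analogy with the $\hat G$ construction of Case~1 breaks down precisely because that construction relied on Claim \ref{cla:Xneighbours} ($N(X)=V(G')$), which has no counterpart here and fails in this configuration.

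What is missing is the case distinction the paper makes at this point. For each triangle of the $K_4$, apply Lemma \ref{lem:rigunion} (in dimension $d-1$) to a minimally rigid spanning subgraph of $M$: if for some triangle the union of the three attachment sets has size at least $d-1$, the lemma produces a rigid subgraph on $|V(M)|+3$ vertices, contradicting the maximality of $M$ (the fourth vertex, having $d+1$ neighbours in that subgraph, could then be absorbed by a $0$-extension plus extra edges --- this is your ``good'' case). Otherwise all four vertices of $X$ attach to one common $(d-2)$-set $N$, and this is where the hypothesis of Case~2 must be invoked: each of the three vertices of $V(M)\setminus N$ has all its neighbours inside $V(M)\cup\{v\}$, hence degree at most $d+1$ in $G$, contradicting $\delta(G)\geq d+2$. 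So the flexible configuration is ruled out by the minimum-degree hypothesis, not by any rigidity construction; without this degree argument your proof cannot be completed.
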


\begin{proof}[Proof of Claim]
We first consider the case when  $C\in \{B_{d-1,d-3}\}\cup \scrb_{d-1,d-2}^+$. 
Then $C$ has $d+5$ vertices so is a spanning subgraph of $G-v$.
We have seen that 
every graph  in $\scrb_{d-1,d-2}^+$ has one degree of freedom and has three vertices of degree $d$ on the smaller  side of its $(d-2)$-separation,
and that  $B_{d-1,d-3}$ has three degrees of freedom and has four vertices of degree $d$ on each side of its $(d-3)$-separation.
These observations  and the facts that $\delta(G-v)\geq d+1$ and each nontrivial infinitesimal motion of a generic realisation of $C$ is an infinitesimal rotation about the afffine subspace which contains its separating set of size $d-2$, respectively $d-3$, imply that we can add edges of $G-v$ to $C$ which cross the separating set to make it $\scrr_{d-1}$-rigid. Hence $G-v$ is $\scrr_{d-1}$-rigid.

We next consider the case when $C=B_{d-1,d-2}$. If $C$ is a spanning subgraph of $G$ then we can procceed as in the previous paragraph to deduce that $G-v$ is $\scrr_{d-1}$-rigid. So we may assume that this is not the case. Then $(G-v)\setminus C$ has exactly one vertex $u$.
Since $d_{G-v}(u)\geq d+1$, $G-v$ is $\scrr_{d-1}$-rigid unless all neighbours of $u$ belong to the same copy of $K_{d+1}-e$ in $B_{d-1,d-2}$.
Suppose the second alternative occurs and let $H$ be the spanning subgraph of $G-v$ obtained by adding $u$ and all its incident edges to  $B_{d-1,d-2}$. Then $H$ has one degree of freedom and the smaller side of the $(d-2)$-separation of $H$ contains vertices which have degree $d$ in $H$ and degree at least $d+1$ in $G-v$.
We can now add an edge of $G-v$ to $H$ which crosses its $(d-2)$-separator to make it $\scrr_{d-1}$-rigid. Hence $G-v$ is $\scrr_{d-1}$-rigid.

It remains to consider the case when $C$ is $\scrr_{d-1}$-rigid. Then $|V(C)|\geq d+1$. Let $H$ be a maximal $\scrr_{d-1}$-rigid subgraph of $G-v$ containing $C$.
Suppose $H\neq G-v$ and note that 
$(G-v)-H$ has at most $4$ vertices.
Since each vertex of $(G-v)-H$ has at most $d-2$ neighbours in $H$ and $\delta(G-v)\geq d+1$ we have  $(G-v)-H=K_4$ and $H=C=K_{d+1}$.
We can now apply Lemma \ref{lem:rigunion} to a minimally rigid spanning subgraph of $H$, and to each $K_3$ in $(G-v)-H$, in order to deduce that all vertices of $(G-v)-H$ are adjacent to the same set of $d-2$ vertices of $H$. This cannot occur since every vertex of $H$ which is not joined to a vertex of $G-v-H$ would have degree at most $d+1$ in $G$,  contradicting the assumption of Case 2.
Hence $H=G-v$ and $G-v$ is $\scrr_{d-1}$-rigid.
\end{proof}

Let $(G-v)^*$, respectively $C^*$, be obtained from $G-v$, respectively $C$, by adding $v$ and all edges from $v$ to the vertices of $G-v$, respectively $C$.
Then $(G-v)^*$ is $\scrr_{d}$-rigid  by Claim~\ref{cla:G-v} and Lemma~\ref{lem:coning}, and,  when $C$ is $\scrr_{d-1}$-rigid, $C^*$ is an $\scrr_{d}$-circuit
by Lemma \ref{lem:coning}\ref{it:coning:circ}.

Let $S$ be the set of all edges of $G^*$ which are not in  $G$. Since $C^*$ is rigid or $C^*\in \{B_{d,d-1}, B_{d,d-2}\}\cup \scrb_{d,d-1}^+$, $C^*$ is not an  $\scrr_{d}$-circuit in $G$. Hence $E(C^*)\cap S\neq \emptyset$. If $|S|=1$, say
$S=\{f\}$, then $G=(G-v)^*-f$ would be $\scrr_{d}$-rigid since $(G-v)^*$ is $\scrr_{d}$-rigid and $f\in E(C^*)$. Hence $|S|\geq 2$ and $\Delta(G)=d(v)\leq |V|-3$.  Let $\bar G$ be the complement of $G$. 

Suppose  $|V|\leq d+5$. Then, since $d+2\leq \delta(G)\leq \Delta(G)\leq |V|-3$, we have $|V|=d+5$ and $G$ is $(d+2)$-regular. This implies that  $\bar G$ is a 2-regular graph on $d+5\geq 8$ vertices and we may choose two non-adjacent vertices $v_1,v_2$  with no common neighbours in $\bar G$.   Then $v_1v_2\in E$ and $|N_G(v_1)\cap N_G(v_2)|=d-1$. We can use the facts that $G$ is  
$d$-sparse,  $(d+2)$-regular and $|V|=d+5$ to deduce that    
$G/v_1v_2$ is $d$-sparse. (If not, then some set $X\subseteq V(G/v_1v_2)$ induces more that $d|X|-\binom{d+1}{2}$ edges.
Then $|X|\geq d+2$ and the fact that each vertex of $V(G/v_1v_2)\sm X$ has degree at least $d+1$ implies that $G/v_1v_2$ has more that  $d|V(G/v_1v_2)|-\binom{d+1}{2}$ edges.
This contradicts the fact that $G$ is a flexible $\scrr_{d}$-circuit so has at most $d|V(G)|-\binom{d+1}{2}$ edges.)
Since $|V(G/v_1v_2)|=d+4$, $G/v_1v_2$ has no flexible $\scrr_d$-circuits by the minimality of $G$. Hence $G/v_1v_2$ is  $\scrr_{d}$-independent.
Since $|N_G(v_1)\cap N_G(v_2)|=d-1$, we can now use Lemma \ref{lem:vsplit} to deduce that $G$ is $\scrr_d$-independent and contradict the choice of $G$. 

Hence $|V|= d+6$. Since $\delta(G)\geq d+2$ and $\Delta(G)\leq d+3$ we have 
$\delta(\bar G)\geq 2$ and $\Delta(\bar G)\leq 3$. We can now complete the proof of the theorem by considering three subcases.

\smallskip

\noindent \textbf{\boldmath Subcase 2.1: $\delta(\bar G)= 2$ and $\Delta(\bar G)= 3$}.
 In this case there exist two vertices $x,y\in V$ with $d_{\bar G}(x)=2$, $d_{\bar G}(y)=3$ and $\dist_{\bar G}(x,y)\geq 3$ by Lemma \ref{lem:deg23}. 
Hence $|N_G(x)\cap N_G(y)|=d-1$ and we can deduce as in the previous paragraph that
$G/xy$ is $d$-sparse. 

Suppose  $G/xy$ contains an $\scrr_d$-circuit. Then $G/xy=B_{d,d-1}$ by the minimality and $d$-sparsity of $G$. Since  $B_{d,d-1}$ has $d-3$ vertices of degree $d+4$ and
only the vertex obtained by contracting $xy$ has degree $d+4$ in $G/xy$ 
we must have $d=4$. And when $d=4$, the fact that $\delta(G)=6$ would imply that $x$ and $y$ are adjacent in $G$ to each of the six vertices of degree five in $G/xy$.  Since they are also adjacent to each other this contradicts the fact that $d_G(y)=d+2=6$.

Hence $G/v_1v_2$ is  $\scrr_{d}$-independent. Since $|N_G(v_1)\cap N_G(v_2)|=d-1$, we can now use Lemma \ref{lem:vsplit} to deduce that $G$ is $\scrr_d$-independent and contradict the choice of $G$. 
\smallskip

\noindent \textbf{\boldmath Subcase 2.2: $\bar G$ is 2-regular}.
 In this case we have $|S|=2$ and $G$ is $(d+3)$-regular. The fact that $(G-v)^*$ is $\scrr_d$-rigid  and contains at least two  $\scrr_d$-circuits ($G$ and $C^*$) tells us that $|E((G-v)^*)|\geq d|V(G)|-\binom{d+1}{2}+2$. Since $|E|=|E((G-v)^*)|-|S|$ and $G$ is $d$-sparse this gives
\begin{equation*}
 \frac{(d+3)(d+6)}{2}=|E|=d|V(G)|-\binom{d+1}{2}=\frac{d(d+11)}{2}.
\end{equation*}
This implies that $d=9$ and $|V|=15$. We can now use Lemma~\ref{lem:comp}\ref{it:comp:12reg} to deduce that $G$ is 
$\scrr_9$-independent, contradicting the fact that $G$ is an $\scrr_9$-circuit. 

\smallskip

\noindent \textbf{\boldmath Subcase 2.3: $\bar G$ is 3-regular}.
 In this case we have $|S|=3$ and $G$ is $(d+2)$-regular.  Since $(G-v)^*$ is $\scrr_d$-rigid and contains at least two  $\scrr_d$-circuits we can deduce as in the previous subcase that  $|E(G)|\geq d|V|-\binom{d+1}{2}-1$. The fact that $G$ is $d$-sparse now gives
\begin{equation*}
 \frac{(d+2)(d+6)}{2}=|E|=d|V|-\binom{d+1}{2}-\alpha=\frac{d(d+11)}{2}-\alpha
\end{equation*}
for some $\alpha =0, 1$. This implies that $\alpha=0$, $d=4$ and $|V|=10$. 
We can now use Lemma~\ref{lem:comp}\ref{it:comp:6reg} to deduce that $G$ is 
$\scrr_4$-independent, contradicting the fact that $G$ is an $\scrr_4$-circuit. 
\phantom{p}
\hfill
$\Box$

\subsection{Proof of Corollary \ref{cor:edges}}
The corollary follows immediately from Theorem \ref{thm:vertex} if $|V|\leq d+6$. Since $\delta(G)\geq d+1$ we have $|E|>d(d+9)/2$ when either $|V|\geq d+8$, or $|V|= d+7$ and $\delta(G)\geq d+2$. Hence we may assume that $|V|= d+7$ and $\delta(G)= d+1$. Choose a vertex $v$ with $d(v)=d+1$.
Then $v$ has two non-adjacent neighbours $v_1,v_2$ since otherwise $G$ would contain the 
rigid $\scrr_d$-circuit $K_{d+2}$. Let $H=G-v+v_1v_2$. If $H$ was $\scrr_d$-independent then $G$ would be $\scrr_d$-independent  by Lemma \ref{lem:01ext}. Hence $H$ contains an $\scrr_d$-circuit $C$. If $C$ is flexible then Theorem \ref{thm:vertex} implies that $C\in \{B_{d,d-1},B_{d,d-2}\}\cup \scrb_{d,d-1}^+$  and hence $|E|>|E(C)|\geq d(d+9)/2$. Thus we may assume that $C$ is $\scrr_d$-rigid. Then  $C-v_1v_2$ is an $\scrr_d$-rigid subgraph with at least $d+2$ vertices. Let $X=V(G)\sm V(C)$ and let $E(X,V\sm X)$ be the set of edges with one endvertex in $X$ and one in $V\sm X$.
Then $1\leq |X|\leq 5$. Since $\delta(G)= d+1$ and $|X|\leq 5$ we have 
\begin{align*}
|E|&=  |E(C-v_1v_2)|+|E(X)|+|E(X,V\sm X)|\\
&\geq d|V\sm X|-\binom{d+1}{2}+\binom{|X|}{2}+|X|(d+1-|X|+1)\\
&= d|V|-\binom{d+1}{2}-\frac{|X|(|X|-3)}{2}\\
&\geq \frac{d(d+13)}{2}-5.
\end{align*}
We can now use the fact that $d\geq 3$ to deduce that $|E|> d(d+9)/2$.
\qed

\subsection{Proof of Corollary \ref{cor:tight}}
Suppose $G=(V,E)$ is $\scrr_d$-rigid. Then $r_d(G)=d|V|-\binom{d+1}{2}$. Let $H=(V,F)$ be a maximal $\scrr_d$-independent subgraph of $G$. Then
$|F|=r_d(H)=d|V|-\binom{d+1}{2}$, $H$ is $d$-tight and $B_{d,d-1}, B_{d,d-2}\not\subseteq H$ by Lemma \ref{lem:banana}. In addition
 $H$ is $d$-connected by Lemma \ref{lem:intbridge}\ref{it:intbridge:rank}.

Conversely, suppose that $G$ has a spanning subgraph $H=(V,F)$ which satisfies the hypotheses of the statement. Since $H$ is $d$-tight, it is $d$-sparse and hence does not
contain any $\scrr_d$-rigid circuits. If $C\subseteq H$ for some $\scrb_{d,d-1}^+$ then we would have $C=H$ since $C$ is $d$-tight and $|V(C)|=d+6\geq |V(G)|$. This would contradict the hypothesis that $H$ is  $d$-connected so $H$ contains no graph in $\scrb_{d,d-1}^+$. Theorem \ref{thm:vertex} combined with   the hypothesis that $B_{d,d-1}, B_{d,d-2}\not\subseteq H$ now implies that $H$ does not contain any flexible $\scrr_d$-circuits. Hence $H$ is $\scrr_d$-independent. Since $H$ is $d$-tight, it is $\scrr_d$-rigid. This and the hypothesis that $H$ is a spanning subgraph of $G$ imply that $G$ is $\scrr_d$-rigid.
\qed

\section{Closing Remarks}
We briefly consider some possible extensions of our results.
\subsection{Generalised 2-sums}

Let $G=(V,E)$, $G_1=(V_1,E_1)$ and $G_2=(V_2,E_2)$ be graphs. We say that $G$ is a {\em $t$-sum of $G_1,G_2$ along an edge $e$} if $G=(G_1\cup G_2)-e$, $G_1\cap G_2=K_t$ and $e\in E_1\cap E_2$. We conjecture that  Lemma \ref{lem:2-sum} can be extended to $t$-sums.

\begin{conj} \label{con:t-sum} Suppose that
 $G$ is a $t$-sum of $G_1,G_2$ along an edge $e$ for some $2\leq t\leq d+1$.  Then $G$ is an $\scrr_d$-circuit if and only if $G_1,G_2$ are $\scrr_d$-circuits. 
\end{conj}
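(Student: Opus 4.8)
The plan is to follow the template of the proof of Lemma~\ref{lem:2-sum} as closely as possible, exploiting the fact that the hypothesis $t\leq d+1$ guarantees that the separator $K_t=G_1\cap G_2$ is $\scrr_d$-rigid (it is complete on at most $d+1$ vertices). This is precisely what makes Lemma~\ref{lem:intbridge}\ref{it:intbridge:indep} applicable across the separator, just as the single edge $uv$, a rigid $K_2$, did in the $2$-sum case. A useful preliminary observation is that coning interacts cleanly with $t$-sums: if $G^{*},G_1^{*},G_2^{*}$ denote the cones obtained by adding one apex vertex adjacent to all existing vertices, then $G^{*}$ is the $(t+1)$-sum of $G_1^{*},G_2^{*}$ along $K_{t+1}$ (the apex joins the $t$ separator vertices into a $K_{t+1}$) and $e$ still lies in the enlarged separator. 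By Lemma~\ref{lem:coning}, the conjecture for the pair $(d,t)$ is therefore equivalent to the conjecture for $(d+1,t+1)$. Since the base case $t=2$ is exactly Lemma~\ref{lem:2-sum}, this already settles every instance in which $G,G_1,G_2$ all arise as cones (this is how $B_{d,d-1}$ and $B_{d,d-2}$ were handled in the text); the substance of the conjecture is thus concentrated in the graphs that are not cones.

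For necessity I would first establish the rank identity
\[
r_d(G_1\cup G_2)=r_d(G_1)+r_d(G_2)-\binom{t}{2}.
\]
This follows by extending $E(K_t)$ to a base $B_i$ of $E_i$ for $i=1,2$ (possible since $K_t$ is $\scrr_d$-independent), observing that $B_1\cap B_2=E(K_t)$ because $E_1\cap E_2=E(K_t)$, and applying Lemma~\ref{lem:intbridge}\ref{it:intbridge:indep} to the $\scrr_d$-independent graphs $(V_1,B_1)$ and $(V_2,B_2)$ whose intersection is the rigid graph $K_t$. With this identity the case analysis of Lemma~\ref{lem:2-sum} transfers. If both $G_i$ are $\scrr_d$-independent then $G+e=G_1\cup G_2$, and hence $G$, is independent, a contradiction. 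If exactly one, say $G_1$, is independent, then since $G_2$ is dependent it contains a circuit $C$; if $e\notin C$ then $C\subsetneq G$ contradicts that $G$ is a circuit, while if $e\in C$ one checks that $e$ lies in the closure of $E(G)$, so the identity forces $r_d(G)=r_d(G+e)=|E(G)|$, again a contradiction. When both $G_i$ are dependent, comparing the identity with $r_d(G)=|E(G)|-1$ pins down $r_d(G_i)=|E_i|-1$, so each $G_i$ carries a unique circuit $C_i$; the fact that $C_i\not\subseteq E(G)$ would otherwise yield a proper subcircuit of $G$ shows $e\in C_1\cap C_2$, and circuit elimination on $C_1,C_2$ together with the minimality of $G$ gives $C_1\cup C_2=E_1\cup E_2$.

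The delicate point, even in this last step, is to upgrade $C_1\cup C_2=E_1\cup E_2$ to $C_i=E_i$, i.e.\ to rule out coloops of $G_i$ among the separator edges $E(K_t)\setminus\{e\}$. Removing such an edge $h$ breaks $K_t$ into the non-rigid graph $K_t-h$, so Lemma~\ref{lem:intbridge}\ref{it:intbridge:indep} no longer applies to $G-h$, and the clean decomposition is lost. For sufficiency the easy half is immediate: if $G_1,G_2$ are $\scrr_d$-circuits both containing $e$, then the circuit elimination axiom shows $G=(G_1\cup G_2)-e$ is $\scrr_d$-dependent, hence contains a circuit $G'$. The task is to prove $G'=G$. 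In the $2$-sum case this was clean because a two-vertex separation is matroidal: any circuit meeting both sides restricts through $uv$ to a subcircuit on each side, so $G'$ is itself a $2$-sum of subcircuits and necessity forces $G'=G$. I expect this to be the main obstacle for $t\geq 3$: a vertex separator $K_t$ with $t\geq 3$ does not in general induce a matroid separation of $\scrr_d$, so $G'$ need not decompose as a $t$-sum along $E(K_t)$, and its interaction with the $\binom{t}{2}-1$ separator edges present in $G$ can be intricate. My intended route is to combine the coning equivalence (which resolves the cone cases outright) with a direct analysis of how a minimal dependent $G'$ can meet $E(K_t)-e$, aiming to show that $\scrr_d$-independence of each $G_i-e$ forces $G'$ to contain all of $E_1-e$ and all of $E_2-e$. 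Making this implication rigorous uniformly over $2\leq t\leq d+1$, rather than only in the matroidal case $t=2$, is where I anticipate the genuine work, and is presumably the reason the statement is posed as a conjecture rather than proved.
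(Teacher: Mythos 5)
Your proposal is for a statement the paper itself does not prove: Conjecture~\ref{con:t-sum} is left open there, and the closest the paper comes is the partial result Lemma~\ref{lem:t-sum}. What you have written is, in essence, a blind reconstruction of that lemma by the same method: your rank identity obtained by extending $E(K_t)$ to bases and applying Lemma~\ref{lem:intbridge}\ref{it:intbridge:indep} is exactly the computation in the paper's proof of Lemma~\ref{lem:t-sum}\ref{it:t-sum:G12circ}; your three-case analysis of necessity is the paper's proof of Lemma~\ref{lem:t-sum}\ref{it:t-sum:Gcirc}; and the obstacle you isolate on the sufficiency side (placing the edges of $E(K_t)-e$ in the circuit $G'$) is precisely what Lemma~\ref{lem:t-sum}\ref{it:t-sum:G12circ} leaves unresolved, since it only shows that $G$ contains a unique $\scrr_d$-circuit $G'$ with $E\sm(E_1\cap E_2)\subseteq E(G')$. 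Your coning observation is sound but is a one-way reduction: it shows that the case $(d+1,t+1)$ implies the case $(d,t)$, and recovers $(d+1,t+1)$ only for instances in which $G,G_1,G_2$ are all cones over a common separator vertex.

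The more important point concerns the step you call ``delicate'' in the necessity direction: it is not merely delicate, it is false, so your refusal to claim it is exactly right --- and the paper's Lemma~\ref{lem:t-sum}\ref{it:t-sum:Gcirc}, which asserts full necessity, is incorrect as stated. Write $B_{3,2}=(H_1\cup H_2)-ab$ with $H_1,H_2\cong K_5$ and $V(H_1)\cap V(H_2)=\{a,b\}$; the paper proves $B_{3,2}$ is an $\scrr_3$-circuit. Pick $c\in V(H_2)\sm\{a,b\}$, let $G_1$ be obtained from $H_1$ by adding the vertex $c$ and the edges $ca,cb$, and let $G_2=H_2$. Then $G_1\cap G_2$ is the triangle on $\{a,b,c\}$ and $B_{3,2}=(G_1\cup G_2)-ab$, so $B_{3,2}$ is a $3$-sum of $G_1,G_2$ along $ab$ with $2\leq t=3\leq d+1=4$. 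Here $B_{3,2}$ and $G_2$ are $\scrr_3$-circuits, but $G_1$ is not: its edge set properly contains the dependent set $E(H_1)$, and its unique circuit $H_1$ omits the two separator edges $ca,cb$, which are coloops of $G_1$ --- exactly the configuration you said you could not exclude. (The same padding trick applied to the $2$-sum of two copies of $K_{d+2}$ gives counterexamples for every $d\geq 3$ and every $3\leq t\leq d+1$; for $d=2$ use two copies of $K_4$.) The paper's proof of Lemma~\ref{lem:t-sum}\ref{it:t-sum:Gcirc} breaks at its final sentence for precisely the reason you identify: circuit elimination only yields $E(C_1)\cup E(C_2)=E_1\cup E_2$, and an edge of $E(K_t)-e$ need only lie in one of $C_1,C_2$. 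When $t=2$ there are no such edges, which is why Lemma~\ref{lem:2-sum} is sound. So the ``only if'' direction of the conjecture requires a non-degeneracy hypothesis, for instance that no edge of $E(K_t)-e$ is a coloop of $G_1$ or of $G_2$; the ``if'' direction is untouched by this example and remains genuinely open, with the separator edges as the crux, just as you say.
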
 
 
 Our proof technique for  Lemma \ref{lem:2-sum} gives the following partial result.

\begin{lemma}\label{lem:t-sum} 
Let $G=(V,E)$, $G_1=(V_1,E_1)$ and $G_2=(V_2,E_2)$ be graphs such that $G$ is a $t$-sum of $G_1,G_2$ along an edge $e$ for some $2\leq t\leq d+1$.
\begin{enumerate}
	\item\label{it:t-sum:Gcirc} If $G$ is an $\scrr_d$-circuit, then $G_1$ and $G_2$ are both $\scrr_d$-circuits.
	\item\label{it:t-sum:G12circ} If $G_1$ and $G_2$ are both $\scrr_d$-circuits, then $G$ contains a unique $\scrr_d$-circuit $G'$ and\break $E\sm (E_1\cap E_2)\subseteq E(G')$.
\end{enumerate}
\end{lemma}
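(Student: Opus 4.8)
The plan is to mirror the proof of Lemma~\ref{lem:2-sum}, replacing the single shared edge of a $2$-sum by the shared clique $K_t=G_1\cap G_2$. The hypothesis $t\leq d+1$ is exactly what makes $K_t$ an $\scrr_d$-rigid (indeed minimally $\scrr_d$-rigid) graph, so Lemma~\ref{lem:intbridge}\ref{it:intbridge:indep} is available along the intersection. The computational engine throughout will be the rank-additivity identity
\[
r_d(G_1\cup G_2)=r_d(G_1)+r_d(G_2)-\binom{t}{2},
\]
which I would prove by extending the independent set $E(K_t)$ to bases $B_1,B_2$ of $E_1,E_2$; since $B_1\cap B_2=E(K_t)$ and $K_t$ is rigid, Lemma~\ref{lem:intbridge}\ref{it:intbridge:indep} shows $B_1\cup B_2$ is independent and spans $G_1\cup G_2$, giving the identity (note this holds whether or not the $G_i$ are independent). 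I record also $|E|=|E_1|+|E_2|-\binom{t}{2}-1$, and that throughout I may assume each $G_i$ has an edge outside $K_t$, so that $G_i-e\subsetneq G$.

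For part~\ref{it:t-sum:Gcirc} I would argue as in the necessity half of Lemma~\ref{lem:2-sum}, by cases on the independence of $G_1,G_2$. If both are independent then $G+e=G_1\cup G_2$ is independent by Lemma~\ref{lem:intbridge}\ref{it:intbridge:indep}, forcing $G$ independent, a contradiction. If exactly one, say $G_1$, is independent, then $G_2-e\subsetneq G$ being independent forces $G_2$ to have a unique circuit through $e$; the rank identity together with $e$ lying in the closure of $G$ then yields $r_d(G)=|E|$, again a contradiction. Hence both $G_i$ are dependent, and since $G_i-e\subsetneq G$ is independent each has nullity one with unique circuit $C_i\ni e$. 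Applying the circuit elimination axiom to $C_1,C_2$ produces a circuit inside $(C_1\cup C_2)-e\subseteq E$, which must equal $E$ since $G$ is a circuit; thus $C_1\cup C_2=E_1\cup E_2$, and in particular $E_1\sm C_1\subseteq E(K_t)$ and $E_2\sm C_2\subseteq E(K_t)$.

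For part~\ref{it:t-sum:G12circ}, assuming $G_1,G_2$ are circuits, the rank identity gives $r_d(G+e)=(|E_1|-1)+(|E_2|-1)-\binom{t}{2}=|E|-1$, and $e$ lies in the closure of $G$ because $G_1-e\subseteq G$ spans it; thus $r_d(G)=|E|-1$, so $G$ has nullity one and hence a \emph{unique} circuit $G'$. To place each $f\in E\sm(E_1\cap E_2)$ in $G'$, say $f\in E_1\sm E_2$, I would check $r_d(G-f)=|E|-1$ (so $f$ is not a coloop of $G$): here $G-f+e=(G_1-f)\cup G_2$ still has intersection the rigid clique $K_t$, so the rank identity applies and gives rank $|E|-1$, while $G_2-e\subseteq G-f$ forces $e$ into the closure of $G-f$; together these give $r_d(G-f)=|E|-1$ and hence $f\in E(G')$.

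The main obstacle is the behaviour of the $\binom{t}{2}-1$ shared edges of $K_t$ other than $e$. For a private edge the arguments above are clean precisely because its deletion leaves the intersection equal to the full rigid clique $K_t$, so the additivity identity and the closure argument survive; but for a shared edge $f\in E(K_t)-e$ the intersection drops to $K_t-f$, which is \emph{not} $\scrr_d$-rigid once $t\geq 3$, so neither tool transfers. This is exactly the step that in a $2$-sum is vacuous (there $K_2$ has no edge besides $e$, so $E_i\sm C_i\subseteq E(K_2)-e=\emptyset$ and the circuit-elimination conclusion immediately gives $C_i=E_i$): completing part~\ref{it:t-sum:Gcirc} requires showing no shared $K_t$-edge is a coloop of $G_1$ or $G_2$, and it is the same shared edges that prevent part~\ref{it:t-sum:G12circ} from upgrading ``$E\sm(E_1\cap E_2)\subseteq E(G')$'' to ``$G'=G$'', which is the content left open in Conjecture~\ref{con:t-sum}.
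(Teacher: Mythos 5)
Your treatment of part \ref{it:t-sum:G12circ} is correct and complete, but it takes a genuinely different route from the paper's. The paper argues structurally: given a circuit $G'\subseteq G$, it forms $G_i'$ from $G_i\cap G'$ by adding every missing edge on $V(G')\cap V_1\cap V_2$, and uses Lemma~\ref{lem:intbridge}\ref{it:intbridge:indep} to show that if $G_i'\subsetneq G_i$ (first for both $i$, then for $i=2$ alone once $G_1'=G_1$ is known) then $G'$ would sit inside an $\scrr_d$-independent graph; the conclusion $G_i'=G_i$ \emph{is} the containment $E\sm(E_1\cap E_2)\subseteq E(G')$, and uniqueness is obtained afterwards by the same base computation you use. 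You invert this: you get nullity one first, from $r_d(G_1\cup G_2)=r_d(G_1)+r_d(G_2)-\binom{t}{2}$ together with $e\in\mathrm{cl}(E_1\sm\{e\})$, and then show each private edge $f\in E_1\sm E_2$ lies in $G'$ by noting that $(G_1-f)\cup G_2$ still meets in the full clique (so the identity gives $r_d(G-f+e)=|E|-1$) while $e\in\mathrm{cl}(E_2\sm\{e\})\subseteq\mathrm{cl}(E\sm\{f\})$, whence $G-f$ is independent and $f$ lies in the unique circuit. Both your rank identity (which, as you say, holds for dependent $G_i$, by extending $E(K_t)$ to bases) and the coloop criterion are sound; your version is shorter and arguably cleaner than the paper's.

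Part \ref{it:t-sum:Gcirc} is where your proposal has a real gap, and you have diagnosed its location exactly but not repaired it. In the case where both $G_i$ are dependent you reach, precisely as the paper does, unique circuits $C_i\ni e$ with $C_1\cup C_2=E_1\cup E_2$, hence only $E_i\sm C_i\subseteq E(K_t)\sm\{e\}$; the statement requires $C_i=E_i$. You are right that this last step does not follow from the circuit elimination axiom: one can build a matroid carrying all of the matroid data used so far --- for example, on ground set $\{e,f,g,a_1,a_2,b_1,b_2\}$ with circuits exactly $\{e,a_1,a_2\}$, $\{e,f,g,b_1,b_2\}$ and $\{f,g,a_1,a_2,b_1,b_2\}$, so that $E_1=\{e,f,g,a_1,a_2\}$ and $E_2=\{e,f,g,b_1,b_2\}$ meet in the independent ``clique'' $\{e,f,g\}$, each $E_i$ has nullity one, $E=(E_1\cup E_2)\sm\{e\}$ is a circuit, and yet $f,g$ are coloops of $E_1$. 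So any repair must use rigidity rather than matroid axioms. It is not hopeless: when $t\leq d-1$, for instance, $f=xy\in E$ and $E$ a circuit give $f\in\mathrm{cl}(E\sm\{f\})$, so $x,y$ lie in a common rigid subgraph $R$ of $G-f$; Lemma~\ref{lem:intbridge}\ref{it:intbridge:rank} prevents a rigid subgraph from crossing the separator $V_1\cap V_2$, and then $E(R)\subseteq E_1\sm\{e,f\}$ contradicts $f$ being a coloop of $G_1$, while $E(R)\subseteq E_2\sm\{e,f\}$ contradicts the independence of $G_2-e$. Finally, for honesty of comparison: the paper's own proof of \ref{it:t-sum:Gcirc} ends with the same one-sentence appeal to circuit elimination, transplanted verbatim from Lemma~\ref{lem:2-sum}, where it is valid only because $(E_1\cap E_2)\sm\{e\}=\emptyset$. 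Your ``main obstacle'' paragraph is therefore a sharp observation about the paper's argument as written; but as submitted, your proof of \ref{it:t-sum:Gcirc} is incomplete.
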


\begin{proof}
\ref{it:t-sum:Gcirc} If  $G_1$ and $G_2$ are both  $\scrr_d$-independent, then  Lemma \ref{lem:intbridge}\ref{it:intbridge:indep} implies that $G_1\cup G_2$ is $\scrr_d$-independent. This contradicts the facts that $G$ is an $\scrr_d$-circuit and $G\subseteq G_1\cup G_2$. If  exactly one of $G_1$ and $G_2$, say $G_1$, is  $\scrr_d$-independent then $e$ belongs to the unique  $\scrr_d$-circuit in $G_2$ and  Lemma \ref{lem:intbridge}\ref{it:intbridge:indep} gives  $r_d(G)=r_d(G+e)=|E_1|+|E_2|-\binom{t}{2}-1=|E|$.
 This again contradicts the hypothesis that $G$ is an $\scrr_d$-circuit.  Hence
$G_1$ and $G_2$ are both $\scrr_d$-dependent. Then the matroid circuit elimination axiom  combined with the fact that $G$ is an $\scrr_d$-circuit imply that $G_1$ and $G_2$ are both $\scrr_d$-circuits.  

\medskip
\noindent
\ref{it:t-sum:G12circ}
The circuit elimination axiom implies that $G$ is $\scrr_d$-dependent and hence that $G$ contains an $\scrr_d$-circuit $G'=(V',E')$.
Since $G_i-e$ is $\scrr_d$-independent for $i=1,2$, we have $E'\sm E_i\neq \emptyset$.
Let $G_i'$ be obtained from $G_i\cap G'$ by adding an edge between every pair of non-adjacent vertices in $V'\cap V_1\cap V_2$.
If $G_i'$ is a proper subgraph of $G_i$ for $i=1,2$ then each $G_i'$ is $\scrr_d$-independent and
we can use Lemma~\ref{lem:intbridge}\ref{it:intbridge:indep} to deduce that $G_1'\cup G_2'$ is $\scrr_d$-independent.
This gives a contradiction since $G'\subseteq G_1'\cup G_2'$. Relabelling if necessary we have $G_1'=G_1$.
If $G_2'\neq G_2$ then we may deduce similarly that $G_1'\cup G_2'-e$ is independent.
This again gives a contradiction since $G'\subseteq G_1'\cup G_2'-e$. Hence $G_2'=G_2$.
It remains to show uniqueness. For $i=1,2$, let $B_i$ be a base of $\scrr_d(G_i)$ which contains $E(G_1)\cap E(G_2)$ .
Then $|B_i|=|E_i|-1$ and
Lemma \ref{lem:intbridge}\ref{it:intbridge:indep} gives  
\begin{equation*}
r_d(G)=r_d(G_1\cup G_2-e)=r_d(G_1\cup G_2)=|B_1|+|B_2|-\binom{t}{2}=|E|-1.
\end{equation*}
Hence, $G$ contains a unique $\scrr_d$-circuit.
\end{proof}

Conjecture \ref{con:t-sum} holds when $t=d+1$ and $G_1,G_2$ are both globally rigid in $\R^d$ by a result of Connelly \cite{Con}. It also holds when $d=2$ and $t=3$ by a result of  Jord\'an \cite[Theorem 3.6.15]{J16}.

\subsection{Highly connected flexible circuits}
Bolker and Roth \cite{BR} determined $r_d(K_{s,t})$ for all complete bipartite graphs $K_{s,t}$. Their result implies that  $K_{d+2,d+2}$ is a $(d+2)$-connected  $\scrr_d$-circuit  for all $d\geq 3$ and is flexible when $d\geq 4$, see \cite[Theorem 5.2.1]{GSS}. 
We know of no $(d+3)$-connected flexible $\scrr_d$-circuits  and it is tempting to conjecture that they do not exist.

For the case when $d=3$, Tay \cite{Tay} gives examples of  $4$-connected flexible $\scrr_3$-circuits and Jackson and Jord\'an \cite{JJ06} conjecture that all $5$-connected $\scrr_3$-circuits are rigid. An analogous statement has recently been verified for circuits in the closely related $C^1_2$-cofactor matroid by Clinch, Jackson and Tanigawa \cite{CJT2}.

\subsection{Extending Theorem \ref{thm:vertex}}
We saw in the previous subsection that $K_{d+2,d+2}$ is a flexible  $\scrr_d$-circuit  with $2d+4$ vertices for all $d\geq 4$.
We can obtain a $(d+2)$-connected flexible $\scrr_d$-circuit on $d+8$ vertices by recursively applying the coning operation to the flexible $\scrr_4$-circuit $K_{6,6}$ and then applying Lemma \ref{lem:coning}.
This suggests that it may be difficult to extend Theorem \ref{thm:vertex} to graphs on $d+8$ vertices, but it is conceivable that all flexible $\scrr_d$-circuits on $d+7$ vertices have the form $(G_1\cup G_2)-S$ where $G_i\in \{K_{d+2}, K_{d+3},K_{d+4}\}$, $G_1\cap G_2\in \{K_{d-3}, K_{d-2},K_{d-1}\}$ and $S$ is a suitably chosen set of edges.

For the case when $d=3$, Tay \cite{Tay} gives examples of  $3$-connected flexible $\scrr_3$-circuits with 13 vertices but it is possible that all flexible circuits on at most 12 vertices can be obtained by taking 2-sums of rigid circuits on at most 9 vertices.

\subsection*{Acknowledgement}
\addcontentsline{toc}{section}{Acknowledgement}
The authors would like to thank the referees for their careful reading and helpful comments.
We thank the London Mathematical Society, and the Heilbronn Institute for Mathematical Research, for providing partial financial support through a scheme 5 grant, and a focussed research group grant, respectively. Georg Grasegger was supported by the Austrian Science Fund (FWF): P31888.

\phantomsection
\addcontentsline{toc}{section}{References}

\newpage
\phantomsection
\addcontentsline{toc}{section}{Corrigendum}
\setcounter{theorem}{0}
\begin{center}%
    {\LARGE Corrigendum to `Flexible circuits in the $d$-dimensional rigidity matroid' \par}%
    \vskip 1.5em%
    {\large
      \lineskip .5em%
      \begin{tabular}[t]{c}%
        Georg Grasegger, Hakan Guler, Bill Jackson and Anthony Nixon
      \end{tabular}\par}%
    \vskip 1em%
  \end{center}%

\begin{abstract}
We give a counterexample to Lemma 18(a) and Conjecture 17 in our paper \cite{GGJN} and provide a corrected proof for a weaker version of  Lemma 18(a).
\end{abstract}

We refer the reader to \cite{GGJN,W} for terminology not explicitly defined in this note.
Let $G=(V,E)$, $G_1=(V_1,E_1)$ and $G_2=(V_2,E_2)$ be graphs. We say that $G$ is a {\em $t$-sum of $G_1,G_2$ along an edge $e$} if $G=(G_1\cup G_2)-e$, $G_1\cap G_2=K_t$ and $e\in E_1\cap E_2$. We proved that the 2-sum of $G_1,G_2$ is an $\scrr_d$-circuit if and only if $G_1,G_2$ are $\scrr_d$-circuits in \cite[Lemma 10]{GGJN}, conjectured that this result extends to $t$-sums for all $2\leq t\leq d+1$ in \cite[Conjecture 17]{GGJN}, and claimed to prove a weaker version of this conjecture in \cite[Lemma 18]{GGJN}.  In particular \cite[Lemma 18(a)]{GGJN}  claimed that $G_1,G_2$ are both $\scrr_d$-circuits when their $t$-sum is an $\scrr_d$-circuit.
\begin{figure}[ht]
\begin{center}

\begin{tikzpicture}[]

\node[roundnode] at (0cm,0cm) (u) [] {};
\node[roundnode] at (0:1.5cm) (v1) [] {}
	edge[](u);
\node[roundnode] at (40:1.5cm) (v2) [] {}
	edge[](u)
	edge[](v1);

\node[roundnode] at (80:1.5cm) (v3) [] {}
	edge[](u)
	edge[](v1)
	edge[](v2);
\node[roundnode] at (120:1.5cm) (v4) [] {}
	edge[](u)
	edge[](v3);

\node[roundnode] at (160:1.5cm) (v5) [] {}
	edge[](u)
	edge[](v3)
	edge[](v4);
\node[roundnode] at (200:1.5cm) (v6) [] {}
	edge[](v3)
	edge[](v4)
	edge[](v5);

\node[roundnode] at (240:1.5cm) (v7) [] {}
	edge[](u)
	edge[](v6);
\node[roundnode] at (280:1.5cm) (v8) [] {}
	edge[](u)
	edge[](v6)
	edge[](v7);
\node[roundnode] at (320:1.5cm) (v9) [] {}
	edge[](v1)
	edge[](v2)
	edge[](v3)
	edge[](v6)
	edge[](v7)
	edge[](v8);
\node[] at (270:1.5cm) () [label=below:$G$]{};
\begin{scope}[xshift=6cm]
\node[roundnode] at (0cm,0cm) (u) [] {};
\node[roundnode] at (0:1.5cm) (v1) [] {}
	edge[](u);
\node[roundnode] at (40:1.5cm) (v2) [] {}
	edge[](u)
	edge[](v1);

\node[roundnode] at (80:1.5cm) (v3) [] {}
	edge[](u)
	edge[](v1)
	edge[](v2);

\node[roundnode] at (200:1.5cm) (v6) [] {}
	edge[dashed,thick](u)
	edge[thick](v3);

\node[roundnode] at (240:1.5cm) (v7) [] {}
	edge[](u)
	edge[](v6);
\node[roundnode] at (280:1.5cm) (v8) [] {}
	edge[](u)
	edge[](v6)
	edge[](v7);
\node[roundnode] at (320:1.5cm) (v9) [] {}
	edge[](v1)
	edge[](v2)
	edge[](v3)
	edge[](v6)
	edge[](v7)
	edge[](v8);
\node[] at (270:1.5cm) () [label=below:$G_1$]{};
\end{scope}

\begin{scope}[xshift=6cm]
\begin{scope}[shift=(140:1cm)]
\node[roundnode] at (0cm,0cm) (u) [] {};
\node[roundnode] at (80:1.5cm) (v3) [] {}
	edge[](u);
\node[roundnode] at (120:1.5cm) (v4) [] {}
	edge[](u)
	edge[](v3);

\node[roundnode] at (160:1.5cm) (v5) [] {}
	edge[](u)
	edge[](v3)
	edge[](v4);
\node[roundnode] at (200:1.5cm) (v6) [] {}
	edge[dashed,thick](u)
	edge[thick](v3)
	edge[](v4)
	edge[](v5);

\node[] at (160:1.5cm) () [label=left:$G_2$]{};
\end{scope}
\end{scope}

\end{tikzpicture}

\end{center}
\caption{The graph $G$ is an $\scrr_3$-circuit and is a 3-sum of $G_1,G_2$, along the dashed edge. But $G_1$ is not an $\scrr_3$-circuit since it properly contains the $\scrr_3$-circuit $G_1-f$, where  the bold edge represents $f$.}
\label{fig:1}
\end{figure}
The graphs shown in Figure \ref{fig:1} demonstrate that \cite[Lemma 18(a)]{GGJN} and \cite[Conjecture 17]{GGJN} are false.

The error in the proof of \cite[Lemma 18(a)]{GGJN} occurs in the last sentence of the proof. We can modify this sentence to obtain the following weaker statement. We include the full proof for the sake of completeness.

\begin{lemma}\label{lem:t-sum-c}
Let $G=(V,E)$, $G_1=(V_1,E_1)$ and $G_2=(V_2,E_2)$ be graphs such that $G$ is a $t$-sum of $G_1,G_2$ along an edge $e$ for some $2\leq t\leq d+1$.
 Suppose $G$ is an $\scrr_d$-circuit.  Then, for both $i=1,2$, $G_i$ contains a unique $\scrr_d$-circuit $G_i'$ with $e\in E(G_i')$ and $E(G)=(E(G_1')\cup E(G_2'))-e$.
\end{lemma}

\begin{proof}
 If  $G_1$ and $G_2$ are both  $\scrr_d$-independent, then  \cite[Lemma 9(b)]{GGJN} implies that $G_1\cup G_2$ is $\scrr_d$-independent. This contradicts the facts that $G$ is an $\scrr_d$-circuit and $G\subseteq G_1\cup G_2$. If  exactly one of $G_1$ and $G_2$, say $G_1$, is  $\scrr_d$-independent then $e$ belongs to the unique  $\scrr_d$-circuit in $G_2$ and  \cite[Lemma 9(b)]{GGJN} gives  $r_d(G)=r_d(G+e)=|E_1|+|E_2|-\binom{t}{2}-1=|E|$.
 This again contradicts the hypothesis that $G$ is an $\scrr_d$-circuit.  Hence
$G_1$ and $G_2$ are both $\scrr_d$-dependent. Then the matroid circuit elimination axiom  combined with the fact that $G$ is an $\scrr_d$-circuit imply that, for both $i=1,2$, $G_i$ contains a unique $\scrr_d$-circuit $G_i'$ with $e\in E(G_i')$ and $E(G)=(E(G_1')\cup E(G_2'))-e$.
\end{proof}

We claimed in the paragraph immediately after the proof of \cite[Lemma 18]{GGJN} that two special cases of \cite[Conjecture 17]{GGJN} hold,
but this claim was based on the assumption that \cite[Lemma 18(a)]{GGJN} is valid. The first claim remains true and is given as Lemma \ref{lem:t-sum1-c} below. The second claim, that the $3$-sum of two graphs $G_1,G_2$ along an edge $e$ is an $\scrr_2$-circuit if and only if $G_1,G_2$ are $\scrr_2$-circuits is false. A counterexample is given  in Figure \ref{fig:2}.

\begin{lemma}\label{lem:t-sum1-c}
Let $G$, $G_1$ and $G_2$ be graphs such that $G$ is a $(d+1)$-sum of $G_1,G_2$ along an edge $e$ for some $d\geq 1$.
Suppose $G_1,G_2$ are globally rigid in $\real^d$. Then $G$ is an $\scrr_d$-circuit if and only if $G_1,G_2$ are $\scrr_d$-circuits.
\end{lemma}
\begin{proof}
The hypothesis that $G$ is a $(d+1)$-sum of two globally rigid graphs implies that $G$ is globally rigid in $\real^d$ by \cite[Theorem 11]{C}
and hence is redundantly rigid in $\real^d$ by \cite{H}.
If $G_1,G_2$ are $\scrr_d$-circuits, then $G$ contains a unique $\scrr_d$-circuit by \cite[Lemma 18(b)]{GGJN} and the fact that
$G$ is redundantly rigid in $\real^d$ now implies that $G$ is an $\scrr_d$-circuit.
On the other hand, if $G$ is an $\scrr_d$-circuit, then $G_i$ is
an $\scrr_d$-circuit for both $i=1,2$ by Lemma \ref{lem:t-sum-c} and the hypothesis that $G_i$ is (globally) rigid in $\real^d$.
\end{proof}

\begin{figure}[ht]
\begin{center}

\begin{tikzpicture}[]

\node[roundnode] at (0cm,0cm) (u1) [] {};
\node[roundnode] at (1cm,0cm) (u2) [] {}
	edge[](u1);
\node[roundnode] at (2cm,0cm) (u3) [] {}
	edge[](u2);
\node[roundnode] at (2cm,1cm) (u4) [] {}
	edge[](u3);
\node[roundnode] at (1cm,1cm) (u5) [] {}
	edge[](u4)
	edge[](u1);
\node[roundnode] at (0cm,1cm) (u6) [] {}
	edge[](u5)
	edge[](u1)
	edge[](u2);
\node[roundnode] at (1.5cm,0.5cm) (u7) [] {}
	edge[](u2)
	edge[](u3)
	edge[](u4)
	edge[](u5);
\node[] at (1cm,0cm) () [label=below:$G$]{};

\begin{scope}[xshift=4cm]
\node[roundnode] at (0cm,0cm) (u1) [] {};
\node[roundnode] at (1cm,0cm) (u2) [] {}
	edge[](u1);
\node[roundnode] at (1cm,1cm) (u5) [] {}
	edge[](u1)
	edge[dashed](u2);
\node[roundnode] at (0cm,1cm) (u6) [] {}
	edge[](u5)
	edge[](u1)
	edge[](u2);
\node[roundnode] at (1.5cm,0.5cm) (u7) [] {}
	edge[](u2)
	edge[](u5);
\node[] at (0.75cm,0cm) () [label=below:$G_1$]{};
\end{scope}

\begin{scope}[xshift=6cm]
\node[roundnode] at (1cm,0cm) (u2) [] {};
\node[roundnode] at (2cm,0cm) (u3) [] {}
	edge[](u2);
\node[roundnode] at (2cm,1cm) (u4) [] {}
	edge[](u3);
\node[roundnode] at (1cm,1cm) (u5) [] {}
	edge[](u4)
	edge[dashed](u2);
\node[roundnode] at (1.5cm,0.5cm) (u7) [] {}
	edge[](u2)
	edge[](u3)
	edge[](u4)
	edge[](u5);
\node[] at (1.5cm,0cm) () [label=below:$G_2$]{};
\end{scope}
\end{tikzpicture}

\end{center}
\caption{The graph $G$ is an $\scrr_2$-circuit and is a 3-sum of $G_1$ and $G_2$ along the dashed edge. However $G_1$ is not an $\scrr_2$-circuit since it properly contains the $\scrr_2$-circuit $K_4$.}
\label{fig:2}

\end{figure}

\phantomsection
\addcontentsline{toc}{section}{References}

\end{document}